\numberwithin{equation}{section}
\newtheorem{maintheorem}{Theorem}[section]
\newtheorem{theorem}{Theorem}[section]
\newtheorem{lemma}[theorem]{Lemma}
\newtheorem{proposition}[theorem]{Proposition}
\newtheorem{corollary}[theorem]{Corollary}
\newtheorem{fact}[theorem]{Fact}
\newtheorem{definition}[theorem]{Definition}
\newtheorem{remark}[theorem]{Remark}
\newtheorem{claim}[theorem]{Claim}
\newtheorem{example}[theorem]{Example}
\newtheorem*{open}{Question}
\newcommand{\Var}{{\rm Var}}
\newcommand{\PP}{\mathscr{P}}
\newcommand{\laz}{\mathrm{L}}
\newcommand{ \tl}{t_{\mathrm{L}} }
\newcommand{\dl}{d_{\mathrm{L}}}
\newcommand{\ct}{\mathrm{c}}
\newcommand{\dc}{d_{\mathrm{c}}}
\renewcommand{\Pr}{ \mathrm P}
\newcommand{ \av}{ \mathrm{ave} }
\newcommand{ \mix}{ t_{\mathrm{c}} }
\newcommand{ \ave}{ t_{\mathrm{ave}} }
\newcommand{ \h}{ \mathrm{H} }
\newcommand{ \TV}{ \mathrm{TV} }
\renewcommand{\epsilon}{\varepsilon}
\DeclareMathSymbol{\leqslant}{\mathalpha}{AMSa}{"36} % nicer `smaller or equal'
\DeclareMathSymbol{\geqslant}{\mathalpha}{AMSa}{"3E} % nicer `larger or equal'
\DeclareMathSymbol{\eset}{\mathalpha}{AMSb}{"3F}     % nicer `emptyset'
\renewcommand{\leq}{\;\leqslant\;}                   % redef. of < or =
\newcommand{\N}{\mathbb N}
\newcommand{\R}{\mathbb R}
\newcommand{\Z}{\mathbb Z}
\begin{document}

% Title Page
%\begin{titlepage}
\title{The power of averaging at two consecutive time steps: Proof of a mixing conjecture by Aldous and Fill}
\author{Jonathan Hermon
\thanks{
Department of Statistics, UC Berkeley, USA. E-mail: {\tt jonathan.hermon@stat.berkeley.edu}.}
\and
Yuval Peres
\thanks{
Microsoft Research, Redmond, Washington, USA. E-mail: {\tt peres@microsoft.com}.}
}
\date{}
%\date{\today}
\maketitle

\begin{abstract}
Let $(X_t)_{t = 0 }^{\infty}$ be an irreducible reversible discrete-time Markov chain on a finite state space $\Omega $. Denote its transition matrix by $P$. To avoid periodicity issues (and thus ensuring convergence to equilibrium) one often considers the continuous-time version of the chain $(X_t^{\mathrm{c}})_{t \ge 0} $ whose kernel is given by $H_t:=e^{-t}\sum_k (tP)^k/k! $. Another possibility is to consider the associated averaged chain  $(X_t^{\mathrm{ave}})_{t
= 0}^{\infty}$, whose distribution at time $t$ is obtained by replacing $P^{t}$ by $A_t:=(P^t+P^{t+1})/2$. 

 A sequence of Markov chains is said to exhibit (total-variation) cutoff if
the convergence to stationarity in total-variation distance is abrupt. Let $(X_t^{(n)})_{t = 0 }^{\infty}$ be a sequence of irreducible reversible discrete-time Markov chains. In this work we prove that the sequence of associated continuous-time chains  exhibits total-variation cutoff around time $t_n$ iff the sequence of the associated averaged chains exhibits total-variation cutoff around time $t_n$. Moreover, we show that the width of the cutoff window for the sequence of associated averaged chains is at most that of the sequence of associated continuous-time  chains. In fact, we establish more precise quantitative relations between the mixing-times of the continuous-time and  the averaged versions of a reversible Markov chain, which provide an affirmative answer to a problem raised by Aldous and Fill (\cite[Open Problem 4.17]{aldous2000reversible}).    
\end{abstract}

%Keywords
\paragraph*{\bf Keywords:}
{\small Mixing-time, finite reversible Markov chains, averaged chain,  maximal inequalities, cutoff.
}
%\end{titlepage}
\newpage

% Body

%%%%%%%%%%%%%%%%%%%%%%%%%%%
% Section 1: Introduction %
%%%%%%%%%%%%%%%%%%%%%%%%%%%
\section{Introduction}

Generically, we shall denote the state space of a Markov chain by $\Omega
$ and its stationary distribution by $\pi$. We say that the chain is finite, whenever $\Omega$ is finite.
Let $(X_t)_{t=0}^{\infty}$ be an irreducible Markov chain on a finite state
space $\Omega$ with transition matrix $P$ and stationary distribution $\pi$. We denote such a chain by $(\Omega,P,\pi)$.
A chain $(\Omega,P,\pi) $ is called \emph{\textbf{reversible}}
if $\pi(x)P(x,y)=\pi(y)P(y,x)$, for all $x,y \in \Omega$.

\medskip

We call a chain \textbf{\emph{lazy}},
if $P(x,x) \ge 1/2$ for all $x \in \Omega$. To avoid periodicity and near-periodicity issues, one often considers
the lazy version of a discrete time Markov chain, $(X_t^{\mathrm{L}})_{t
= 0}^{\infty}$, obtained by replacing $P$ with $P_{\laz}:=\frac{1}{2}(I+P)$. Periodicity issues can be avoided also by considering the continuous-time version of the chain, $(X_t^{\mathrm{c}})_{t
\ge 0}$.
This is a continuous-time Markov chain whose heat kernel is defined by $H_t(x,y):=\sum_{k=0}^{\infty}\frac{e^{-t}t^k}{k!}P^k(x,y)$.
It is a classic result of probability theory that for any initial condition
the distribution 
of both $X_t^{\mathrm{L}}$ and $X_t^{\mathrm{c}}$ converge to $\pi$ when $t$ tends to infinity. The object of the
theory of Mixing times of Markov chains is to
study the characteristic of this convergence (see \cite{levin2009markov}
for a self-contained
introduction to the subject).

\medskip

Since reversible Markov chains can only have period 2, one may wonder whether it suffices
to average over two consecutive times in order to avoid near-periodicity issues.  
This motivates considering the following Markov chain. For any $t \ge 0$, denote $A_t:=(P^t+P^{t+1})/2$. The \emph{\textbf{averaged chain}},
 $(X_t^{\mathrm{ave}})_{t
= 0}^{\infty}$,  with ``initial
state" $x$, is a  Markov chain, whose distribution at time $t
\ge 0$ is $A_t(x,\cdot)$, where $A_{t}(x,y):=
(P^{t}(x,y)+P^{t+1}(x,y))/2$. Equivalently, $(X_t^{\mathrm{ave}})_{t
= 0}^{\infty}:=(X_{t+\xi})_{t
= 0}^{\infty}$, where $\xi$ is a  $\mathrm{Bernoulli}(1/2)$ random variable,
 independent of $(X_t)_{t
= 0}^{\infty}$. In other words, if $X_0 \sim \mu $, the averaged chain either starts at a random position distributed according to $\mu $ (i.e.~it starts ``at time 0") or at a random position distributed as $\sum \mu(x) P(x,\cdot) $ (i.e.~it starts ``at time 1") with equal probability. After this, the averaged chain evolves according to the transition matrix $P$. The first to investigate the averaged chain were Peres and Sousi \cite{peres2013mixing}.
We review their results in the related work section.

\medskip

A sequence of Markov chains is said to exhibit (total-variation) cutoff
if
the convergence to stationarity in total-variation distance is abrupt (throughout we consider cutoff only in total-variation). In
this work we prove that given a sequence of irreducible  reversible finite discrete-time Markov chains, the sequence of associated continuous-time chains exhibits total-variation cutoff around time $t_n$ iff the sequence
of the associated averaged chains exhibits total-variation cutoff around
time $t_n$. See Corollary \ref{cor: cutoff} for a precise statement (we defer the formal definition of cutoff to the paragraph preceding Corollary \ref{cor: cutoff}).  In fact, we establish more precise quantitative relations between
the mixing times of the continuous-time and of the averaged versions of a
reversible discrete-time Markov chain (namely, Theorem \ref{thm: AF} and Proposition \ref{prop: easydirectionintro}), which provide an affirmative answer to a problem
raised by Aldous and Fill (\cite[Open Problem 4.17]{aldous2000reversible}, stated below). Moreover, we use them to deduce that when cutoff occurs, the width of the cutoff window for the sequence of associated averaged chains is at most that of the sequence of associated continuous-time chains (see Theorem \ref{thm: window} for a precise statement). 

\medskip

We denote by $\Pr_{\mu}^t$ (resp.~$\Pr_{\mu}$) the distribution of $X_t$ (resp.~$(X_t)_{t
= 0}^{\infty}$), given that the initial distribution is $\mu$. Similarly, we denote by $\h_{\mu}^t$ (resp.~$\h_{\mu}$) the distribution of $X_t^{\mathrm{c}}$ (resp.~$(X_t^{\mathrm{c}})_{t
\ge 0}$) given that  $X_0^{\ct} \sim \mu$. Finally, we denote
by $\mathrm{P}_{\mathrm{L},\mu}^{t}$ (resp.~$\mathrm{P}_{\mathrm{L},\mu}$)
the distribution of $X_t^{\mathrm{L}}$ (resp.~$(X_t^{\mathrm{L}})_{t
= 0}^{\infty}$), given that  $X_0^{\mathrm{L}}\sim \mu$.  When $\mu(\cdot)=1_{\cdot = x}$, for some $x \in \Omega$, we simply write $\Pr_x^t$ (similarly, $\h_x^t$, and $\Pr_{\laz,x}^t$) and $\Pr_x$ (similarly, $\h_x$ and $\Pr_{\laz,x}$).

 We denote the set of  distributions on a (finite) set $\Omega$ by
$\mathscr{P}(\Omega) $. For any $\mu,\nu \in \mathscr{P}(\Omega) $,  their \emph{\textbf{total-variation
distance}} is defined as
$$\|\mu-\nu\|_\mathrm{TV} := \frac{1}{2}\sum_{x \in \Omega} |\mu(x)-\nu(x)|=\max_{B \subset \Omega }\mu(B)-\nu(B) .$$
The worst-case total-variation distance at time $t$ of the continuous-time (resp.~lazy) chain is defined as $$\dc(t)
:= \max_{x \in \Omega} \dc(t,x)\quad \text{ (respectively, } \dl(t)
:= \max_{x \in \Omega} \dl(t,x) ),$$ 
where for every $\mu \in \PP(\Omega)$, $$\dc(t,\mu) :=  \| \Pr_{\mu}(X_t^{\mathrm{c}}
\in \cdot)- \pi\|_\mathrm{TV}=\|\h_{\mu}^{t}- \pi\|_\mathrm{TV} \text{ and }$$ $$\dl(t,\mu) :=  \| \Pr_{\mu}(X_t^{\mathrm{L}}
\in \cdot)- \pi\|_\mathrm{TV}=\| \Pr_{\laz,\mu}^{t}- \pi\|_\mathrm{TV} .$$
The $\epsilon$\textbf{-mixing-time} of the continuous-time
(resp.~lazy) chain is defined as  $$t_{\mathrm{c}}(\epsilon) := \inf \left\{t : \dc(t) \leq
\epsilon \right\}, \quad t_{\mathrm{L}}(\epsilon) := \inf \left\{t
: d_{\mathrm{L}}(t) \leq
\epsilon \right\}. $$ 
We also define the corresponding $\epsilon$-mixing-times w.r.t.~initial distribution $\mu$ to be $$t_{\mathrm{c}}(\epsilon,\mu)
:= \inf \left\{t : d_{\ct}(t,\mu) \leq
\epsilon \right\}  \text{ and } \tl(\epsilon,\mu)
:= \inf \left\{t : \dl(t,\mu) \leq
\epsilon \right\} .$$
Similarly, for the averaged chain we define $d_\mathrm{ave}(t):=\max_{x\in \Omega }d_\mathrm{ave}(t,x)$, where $$d_\mathrm{ave}(t,\mu):=\left\| (\Pr_\mu^{t}+\Pr_\mu^{t+1})/2-
\pi \right\|_\mathrm{TV}=\|\mu(P^{t+1}+P^t)/2 -\pi \|_{\mathrm{TV}}.$$ The  $\epsilon$\textbf{-mixing-time} of the averaged chain (respectively, w.r.t.~$X_0 \sim \mu $, i.e.~w.r.t.~$X_0^{\av} \sim \frac{\mu(I+P)}{2} $)) is denoted by 
$$t_{\mathrm{ave}}(\epsilon):=\inf \left\{t : d_{\mathrm{ave}}(t) \leq
\epsilon \right\} \text{ (respectively, }t_{\mathrm{ave}}(\epsilon,\mu)
:= \inf \left\{t : d_{\av}(t,\mu) \leq
\epsilon \right\} ).$$
When $\epsilon=1/4$ we omit it from the above
notation.

\medskip

We
denote $\Z_{+}:=\{n \in \Z: n \ge 0 \}$ and $\R_+:=\{t \in \R:t \ge 0 \}$.
Let $\phi:\R_{+} \to \R_{+}$ and $\psi:(0,1]
\to (0,1] $. We write  $\phi (t)
\sim t$ if $\lim_{t \to \infty}\phi(t)/t=1$. We write $\psi=o(1)
$ if $\lim_{\epsilon \to 0}\psi(\epsilon)=0 $.
In \cite{aldous2000reversible} Aldous and Fill raised the following question:
\begin{open}[Open Problem 4.17 \cite{aldous2000reversible}]
Show that there exist $\psi:(0,1]
\to (0,1] $ and $\phi  :\R_{+} \to \Z_{+}$ satisfying $\psi=o(1)$ and $\phi(t) \sim t$ such that for every finite irreducible reversible Markov chain, $$ \forall t \ge 0, \quad d_\mathrm{ave}(\phi(t)) \le \psi (\dc(t)).$$
\end{open}
Our Theorem \ref{thm: AF2}, which is in fact a weaker version of our main result, Theorem \ref{thm: AF}, solves Aldous and Fill's Problem. Denote $a \vee b:=\max \{a,b\}$,  $a \wedge b:=\min
\{a,b\}$. For every $t \in \R$ we denote the ceiling of $t$ by $\lceil t \rceil:=\min \{z \in \Z : z \ge t \} $.
\begin{definition}
Let  $0 < \alpha < 1/2$, $C>0$,  $t \ge 1 $ and $x \in (0,1)$. We define
\[\psi_{\alpha,C}(x):= 1 \wedge ( x+ C 
|\log (2x)|^{-\alpha }) \quad \text{and} \quad \phi_{\alpha,C}(t):=t+\lceil C  t^{\frac{1+2\alpha}{2}}
\sqrt{\alpha \log t} \rceil.   \]

\end{definition}
 \begin{remark}
\label{rem: simplifying}
Note that $\phi_{\alpha,C}(t) \sim t$ and $\psi_{\alpha,C}=o(1)$, for all $C>0$ and $0<\alpha < 1/2$. 
%$$0 \le \phi_{\alpha,C}(t)/t-1 \le C \sqrt{\alpha}t^{-\frac{1-2\alpha}{2}}+2/t %\to 0, \quad \text{as }t \to \infty. $$
%$$\forall \ell \ge 1, \quad \psi_{\alpha}(e^{-\ell} /2) < e^{-\ell}/2 +C %\ell^{-\alpha}\to 0, \quad \text{as }\ell \to \infty.  $$
\end{remark}
\begin{maintheorem}
\label{thm: AF}
There exist absolute constants $C_{1},C_{2},C_3>0$ such that
for every finite irreducible reversible Markov chain,  $(\Omega,P,\pi)$,   $\mu \in \PP(\Omega)$, 
 $0 < \alpha \le 1/2$ and $t \ge 1 $,   \begin{equation}
\label{eq: main1}
\dl(\phi_{\alpha,C_1}(t),\mu) \le \dc(t/2,\mu)+  C_{2} t^{-\alpha} .
\end{equation}
\begin{equation}
\label{eq: main2}
d_{\av}(\phi_{\alpha,C_{1}}(t),\mu) \le \dl(2t,\mu)+C_{2} t^{-\alpha}.
\end{equation}
%Consequently, there exists an absolute constant $C_{3}>0$ such that
\begin{equation}
\label{eq: main3}
d_{\av}(\phi_{\alpha,C_{3}}(t),\mu)\le \dc(t,\mu)+  2C_{2} t^{-\alpha} .
\end{equation}
Moreover, (\ref{eq: main1})-(\ref{eq: main3}) remain valid when $\mu$ is omitted from both sides.
\end{maintheorem}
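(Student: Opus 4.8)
The plan is to establish the three inequalities by comparing the spectral decompositions of the continuous-time, lazy, and averaged chains, and then to bound the "tail" of the relevant operator in the appropriate sense. Fix a reversible chain $(\Omega,P,\pi)$ with eigenvalues $1=\lambda_1 > \lambda_2 \ge \cdots \ge \lambda_{|\Omega|} \ge -1$ and associated orthonormal (in $L^2(\pi)$) eigenfunctions. Writing $\|\mu P^t/\pi - 1\|_{L^2(\pi)}^2 = \sum_{i \ge 2} \hat\mu(i)^2 \lambda_i^{2t}$ and similarly $\|\h_\mu^t/\pi-1\|_{2}^2 = \sum_{i\ge 2}\hat\mu(i)^2 e^{-2t(1-\lambda_i)}$, the heart of the matter is the pointwise comparison, for the relevant time scales, between the multipliers $\lambda_i^{2\phi_{\alpha,C}(t)}$ (or the averaged multiplier $\big(\tfrac{1+\lambda_i}{2}\big)^2\lambda_i^{2\phi}$) and $e^{-2(t/2)(1-\lambda_i)}$ (respectively $\big(\tfrac{1+\lambda_i}{2}\big)^{2}$-weighted versions thereof). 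The idea is that pushing the discrete chain $C t^{(1+2\alpha)/2}\sqrt{\alpha\log t}$ extra steps kills precisely the eigenvalues near $-1$ (the "near-periodic" part), at the cost of an additive $t^{-\alpha}$ error, while for the eigenvalues bounded away from $-1$ the discrete multiplier after $\sim t$ steps is dominated by the continuous one after $t/2$ steps.

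The key steps, in order. (1) Reduce total-variation to $L^2$: use the standard bound $4\|\nu-\pi\|_{\TV}^2 \le \|\nu/\pi-1\|_{L^2(\pi)}^2$ together with the submultiplicativity/Cauchy–Schwarz trick that lets one split a walk of length $\phi(t)$ into a first segment of length $\sim t$ (contributing the "$\dc(t/2,\mu)$" or "$\dl(2t,\mu)$" term, possibly after a further $L^2$-to-TV translation handled as in Aldous–Fill / Levin–Peres) and a second, chain-independent segment of length $\Theta(t^{(1+2\alpha)/2}\sqrt{\log t})$ starting from a worst-case distribution, whose TV distance we bound by $L^2$. (2) For that second segment, bound $\|\sigma P^m/\pi - 1\|_{L^2(\pi)}^2 \le \max_{i \ge 2} \lambda_i^{2m}\cdot\|\sigma/\pi-1\|_2^2$; the point is that after we have already run $\Theta(t)$ steps the $L^2$ norm of the current distribution is at most (something like) $e^{O(t)}$, so it suffices to choose $m = m(\alpha,t)$ with $\max_i |\lambda_i|^{2m} \le e^{-O(t)} t^{-2\alpha}$, and the only dangerous $\lambda_i$ are those with $1-|\lambda_i|$ of order $1/t$ — for these $|\lambda_i|^{2m} \approx e^{-2m/t}$, so $m \asymp t\log t$ would suffice but we can do better. (3) The more delicate comparison for \eqref{eq: main1}: split eigenvalues into $\lambda_i \ge 0$ and $\lambda_i < 0$. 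For $\lambda_i \ge 0$, $\lambda_i^{2s} \le e^{-2s(1-\lambda_i)} \cdot e^{2s(\lambda_i - 1 - \log\lambda_i)}$... more simply, $\lambda_i^{2s} = e^{2s\log\lambda_i} \le e^{-2s(1-\lambda_i)}$, so running the discrete chain $s$ steps beats the continuous chain $s$ steps — hence certainly beats it after $t/2$ when $s \ge t/2$; since $\phi_{\alpha,C_1}(t) \ge t/2$ this disposes of the nonnegative eigenvalues. For $\lambda_i < 0$, write $\lambda_i^{2s} = |\lambda_i|^{2s}$ and note $|\lambda_i| \le 1$; if additionally $1-|\lambda_i|$ is bounded below by a constant these are killed by any $m = \Omega(\log(\text{stuff}))$, while if $|\lambda_i|$ is close to $1$ then $-\lambda_i$ is close to $1$ and one uses $|\lambda_i|^{2s} \le e^{-2s(1-|\lambda_i|)}$ and compares with $e^{-2(t/2)(1-\lambda_i)} = e^{-2(t/2)(1+|\lambda_i|)} \le e^{-t}$, which is tiny; so in this regime the discrete side, being $\le e^{-2s(1-|\lambda_i|)}$ with $s \asymp t$, just needs $1-|\lambda_i| \gtrsim (\log t)/t$, and the remaining band $1-|\lambda_i| \in (0,(\log t)/t)$ carries total weight controlled by the additive error — this is where the exponent $(1+2\alpha)/2$ and the $\sqrt{\alpha\log t}$ factor in $\phi_{\alpha,C}$ are calibrated. (4) Derive \eqref{eq: main2} the same way with $P$ replaced by $P_{\laz}$, noting $P_{\laz}$ has nonnegative spectrum so the "near $-1$" issue is absent and only the decay of eigenvalues near $1$ matters; the time change $2t$ accounts for the fact that lazy steps move at half speed. (5) Combine \eqref{eq: main1} and \eqref{eq: main2} (with matched constants, using $\phi_{\alpha,C}\circ\phi_{\alpha,C'} \le \phi_{\alpha,C_3}$ after adjusting constants, and Remark \ref{rem: simplifying}) to get \eqref{eq: main3}. (6) For the "moreover", take the maximum over $x \in \Omega$ on both sides, using $d_\bullet(t) = \max_x d_\bullet(t,x)$ and that $\phi_{\alpha,C}$ does not depend on $\mu$.

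**Main obstacle.** The genuinely hard part is step (3)/(the tail estimate in step (2)): controlling the contribution of eigenvalues in the critical band where $1-|\lambda_i|$ is of order between $1/t$ and $(\log t)/t$, and doing so with an \emph{additive} error of exactly $C t^{-\alpha}$ rather than something like $(\log t)^{-c}$ — this forces a careful, essentially optimal choice of the extra number of steps $\lceil C t^{(1+2\alpha)/2}\sqrt{\alpha\log t}\rceil$ and a quantitative maximal-type inequality bounding $\sum_{i:\,1-|\lambda_i|\le \delta}\hat\mu(i)^2 |\lambda_i|^{2m}$ uniformly over all reversible chains. I expect this to require the maximal-inequality machinery advertised in the abstract (controlling the running $L^2$ norm along the trajectory, so that the "$e^{O(t)}$" factor above is genuinely only $e^{O(t)}$ and not worse), combined with the elementary but sharp inequality $\max_{u \in [0,1]} u^{2m}(1-u) \asymp 1/m$ type estimates to convert a worst-case $L^2$ bound into the stated additive error. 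Everything else — the spectral setup, the $L^2$-to-TV conversions, the laziness time-rescaling, and the composition of the three inequalities — is routine once this core estimate is in hand.
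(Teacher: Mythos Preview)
Your spectral/$L^2$ approach has a structural gap. You plan to upper-bound $\dl(\phi_{\alpha,C_1}(t),\mu)$ via $4\|\nu-\pi\|_{\TV}^2 \le \|\nu/\pi-1\|_2^2$ and then compare eigenvalue multipliers, but this route yields at best an inequality of the form
\[
\dl(\phi_{\alpha,C_1}(t),\mu) \;\le\; \tfrac12\,\|\h_\mu^{t/2}/\pi - 1\|_2 + Ct^{-\alpha},
\]
and the right-hand side is the $L^2$ distance of the continuous-time chain, not $\dc(t/2,\mu)$. There is no reverse inequality converting $L^2$ back down to total variation; the $L^2$ distance can exceed the TV distance by an arbitrary factor (think of $\mu=\delta_x$ with $\pi(x)$ tiny). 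Your parenthetical ``possibly after a further $L^2$-to-TV translation'' is exactly the missing step, and no such translation exists in general. The ``submultiplicativity/Cauchy--Schwarz trick'' you invoke for the first segment does not help either: running the \emph{lazy} chain for about $t$ steps produces $\dl(t,\mu)$, not $\dc(t/2,\mu)$, and bounding the former by the latter is the very content of \eqref{eq: main1}. (A secondary confusion: in your steps (3)--(4) the roles are reversed --- it is $d_{\av}$ in \eqref{eq: main2}, not $\dl$ in \eqref{eq: main1}, that must contend with eigenvalues of $P$ near $-1$; the lazy kernel $P_{\laz}$ already has nonnegative spectrum.)

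The paper's argument is not spectral and never passes through $L^2$ distances of densities. It works directly in TV via a coupling based on $H_{t/2} = \sum_k \Pr[\xi_t=k]\,P_{\laz}^k$ with $\xi_t\sim\mathrm{Pois}(t)$: one couples the lazy chain started from $\mu$, evaluated at the random time $\xi_t$, to the \emph{stationary} lazy chain, with success probability exactly $1-\dc(t/2,\mu)$ (this is how the TV quantity $\dc(t/2,\mu)$ enters the right-hand side). On the event $G$ that the coupling succeeds and $\xi_t\in[t-r,t+r]$ (with $r=2\sqrt{2t\log s}$), one runs both chains forward and compares them at time $t+m$, where $m=\lceil r(\sqrt{s}+1)\rceil$. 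For any test set $B$ the conditional discrepancy is $|P_{\laz}^{\,t+m-\xi_t}1_B(x) - P_{\laz}^{\,m} 1_B(x)|$ at the coupling state $x$, which telescopes into at most $r$ increments $|\bigtriangleup P_{\laz}^{\,i} 1_B(x)|$ with $i \ge r\sqrt{s}$, hence is at most $s^{-1/2}\sup_i i\,|\bigtriangleup P_{\laz}^{\,i} 1_B(x)|$. Stein's maximal inequality $\|\sup_i (i+1)\,\bigtriangleup P_{\laz}^{\,i} f\|_2^2 \le C\,\Var_\pi f$, applied with $f=1_B$ and averaged over the (at most $\pi$-distributed) coupling state, bounds the residual by $\sqrt{C/s}$; choosing $s = t^\alpha\sqrt{\alpha\log t}$ gives \eqref{eq: main1}--\eqref{eq: main2}, and \eqref{eq: main3} follows by composition. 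The maximal inequality is applied to indicator functions $1_B$, not to the density $\mu/\pi$ --- the coupling has already traded $\mu$ for $\pi$ at the cost of the $\dc(t/2,\mu)$ term --- and this is precisely what lets the argument stay at the TV level throughout.
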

Note that \eqref{eq: main3} follows from (\ref{eq: main1})-(\ref{eq: main2}) by picking $C_3$ so that $\phi_{\alpha,C_{3}}(t) \ge \phi_{\alpha,C_{1}} (\lceil \frac{1}{2} \phi_{\alpha,C_{1}}(2t) \rceil )$. 
\begin{remark}
The converse inequality $\dc(t+2t^{3/4}) \le d_{\av}(t)+ e^{-\sqrt{t}}$ is easy ((\ref{eq: ctandlazyarefaster1intro})). Combined with \eqref{eq: main3} one can readily see that $\dc(\cdot)$  exhibits an abrupt transition iff $d_{\av}(\cdot)$ exhibits an abrupt transition (in which case, both occur around the same time).
\end{remark}
\begin{maintheorem}
\label{thm: AF2}
There exist absolute constants $C_{1},C_{2}>0$ such that for every finite irreducible reversible Markov chain
\begin{equation}
\label{eq: AFthm1}
d_{\av}(\phi_{\alpha,C_{1}}(t)) \le  \psi_{\alpha,C_{2}} (\dc(t)), \text{ for every }0 < \alpha < 1/2 \text{ and } t \ge 2.
\end{equation}
\end{maintheorem}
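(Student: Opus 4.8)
The plan is to derive Theorem~\ref{thm: AF2} as a short corollary of inequality~\eqref{eq: main3} of Theorem~\ref{thm: AF}, combined with the elementary fact that for a reversible chain the continuous-time distance cannot decay faster than a fixed exponential. Write $C_1^{\star},C_2^{\star},C_3^{\star}$ for the absolute constants furnished by Theorem~\ref{thm: AF}; the task is then to choose the constants $C_1,C_2$ appearing in Theorem~\ref{thm: AF2} in terms of these. A one-point chain makes every quantity vanish, so we may assume $|\Omega|\geq 2$.

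First I would record the envelope bound $\tfrac12 e^{-2t}\leq\dc(t)<1$ for all $t\geq 0$. The upper bound holds because $t\mapsto\dc(t)$ is non-increasing and $\dc(0)=\max_x(1-\pi(x))<1$. For the lower bound, let $f$ be a real eigenfunction of $P$ for its second-largest eigenvalue $\lambda_2\in[-1,1)$, normalized by $\|f\|_\infty=1$, and let $x$ be a state with $|f(x)|=1$; testing the signed measure $\h_x^t-\pi$ against $f$ and using $\sum_y\pi(y)f(y)=0$, $H_t f=e^{(\lambda_2-1)t}f$ and $\lambda_2\geq -1$ yields
\[
2\dc(t)\;\geq\;2\dc(t,x)\;\geq\;\bigl|(H_tf)(x)\bigr|\;=\;e^{-(1-\lambda_2)t}\;\geq\;e^{-2t}.
\]
Consequently, for every $t\geq 2$ we get $-2t\leq\log(2\dc(t))<\log 2\leq 2t$, so $|\log(2\dc(t))|\leq 2t$, and therefore $|\log(2\dc(t))|^{-\alpha}\geq(2t)^{-\alpha}\geq 2^{-1/2}t^{-\alpha}$ for all $0<\alpha<1/2$.

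It then remains to assemble the pieces. Put $C_1:=C_3^{\star}$ and $C_2:=2\sqrt 2\,C_2^{\star}$. For $t\geq 2$ and $0<\alpha<1/2$ the previous estimate gives
\[
\psi_{\alpha,C_2}\bigl(\dc(t)\bigr)\;=\;1\wedge\bigl(\dc(t)+2\sqrt 2\,C_2^{\star}\,|\log(2\dc(t))|^{-\alpha}\bigr)\;\geq\;1\wedge\bigl(\dc(t)+2C_2^{\star}\,t^{-\alpha}\bigr),
\]
while \eqref{eq: main3}, applied with the constants $C_3^{\star}$ and $C_2^{\star}$, together with the trivial bound $d_{\av}\leq 1$, gives
\[
d_{\av}\bigl(\phi_{\alpha,C_1}(t)\bigr)\;=\;d_{\av}\bigl(\phi_{\alpha,C_3^{\star}}(t)\bigr)\;\leq\;1\wedge\bigl(\dc(t)+2C_2^{\star}\,t^{-\alpha}\bigr).
\]
Chaining the last two displays is precisely \eqref{eq: AFthm1}. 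The only input that is not bookkeeping or a verbatim appeal to \eqref{eq: main3} is the envelope bound $\dc(t)\geq\tfrac12 e^{-2t}$, and this is really the heart of the reduction: it is exactly what converts the additive error $t^{-\alpha}$ in Theorem~\ref{thm: AF} into the $|\log(2\dc(t))|^{-\alpha}$ error demanded by Aldous and Fill. Since this bound is classical and the hard analytic work is already carried out in Theorem~\ref{thm: AF}, I do not expect a genuine obstacle here; the only care needed is in tracking the constants so that the slack $2^{1/(2\alpha)}\geq 2$ available for $\alpha<1/2$ is enough to absorb the rate-$2$ exponential envelope.
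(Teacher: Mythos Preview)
Your proof is correct and follows exactly the approach sketched in the paper: the paper states that Theorem~\ref{thm: AF2} is an immediate consequence of \eqref{eq: main3} together with the worst-case estimate $\dc(t)\geq(e^{-2t}/2)1_{|\Omega|>1}$ and omits the details, and you have supplied precisely those details, including a self-contained eigenfunction proof of that lower envelope. There is nothing to add.
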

\begin{remark}
\label{rem: alternativephrasingofAF}
Theorem \ref{thm: AF2} can be rephrased as follows. There exist absolute constants $C_{1},C_{2}>0$ such that for every finite irreducible reversible Markov chain,
\begin{equation}
\label{eq: AFalternative}
\ave(\psi_{\alpha,C_{2}}(\epsilon)) \le  \phi_{\alpha,C_{1}} (\mix(\epsilon)), \text{ for all }0<\alpha<1/2 \text{ and } 0<\epsilon < 1.
\end{equation}
\end{remark}
Theorem \ref{thm: AF2} is an immediate consequence of (\ref{eq: main3}) together with   the ``worst-case" estimate $\dc(t) \ge (e^{-2t}/2)1_{|\Omega|>1} $ (e.g.~\cite[Lemma 20.11]{levin2009markov}). We omit the details.
Theorem \ref{thm: AF} follows in turn as the particular case  $s:=2 \vee t^{\alpha}\sqrt{ \alpha \log t}  $  of the following proposition.
\begin{proposition}
\label{prop: AFintro}
There exists an absolute constant $C$ such that for every finite irreducible reversible chain,  $(\Omega,P,\pi)$,
every $\mu \in \PP(\Omega)$, 
$t \ge 2$ and $ s \in [2, e^{t}]$ we have that
\begin{equation}
\label{eq: cttolazyintro}
\dl(t+ \lceil s \sqrt{t}\rceil , \mu )\le\dc(t/2, \mu )+
Cs^{-1} \sqrt{\log s  } .
\end{equation}
\begin{equation}
\label{eq: lazytoaveintro}
d_{\av}(t+\lceil s \sqrt{t}\rceil, \mu )\le \dl(2t, \mu )+ 
Cs^{-1} \sqrt{\log s} .
\end{equation}
\end{proposition}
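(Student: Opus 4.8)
\emph{Setup and reduction.} All the quantities at play are the total-variation distance from $\pi$ of the \emph{same} discrete chain $(\Omega,P,\pi)$ run for a \emph{random} number of steps: with $P_{\laz}=\tfrac{I+P}{2}$ one has $\mu P_{\laz}^{m}=\sum_{k}\binom{m}{k}2^{-m}\,\mu P^{k}$, $H_{u}^{\mu}=\sum_{k}\tfrac{e^{-u}u^{k}}{k!}\,\mu P^{k}$, and $\mu A_{m}=\tfrac12\mu P^{m}+\tfrac12\mu P^{m+1}$, so each of $\dl(m,\mu)$, $\dc(u,\mu)$, $d_{\av}(m,\mu)$ is of the form $\big\|\sum_{k}a_{k}(\mu P^{k}-\pi)\big\|_{\TV}$ for a probability vector $(a_{k})$ on $\Z_{+}$ (using $\pi P=\pi$ to subtract $\pi$ termwise). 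The elementary monotonicity here is: if $(b_{k})$ is the convolution of $(a_{k})$ with another probability vector --- ``take a further independent random number of $P$-steps'' --- then the $(b_{k})$-smoothed distance is at most the $(a_{k})$-smoothed one (it is the latter followed by a stochastic matrix fixing $\pi$, and $\|\cdot\|_{\TV}$ is non-expanding under such a map), and if $(b_{k})$ equals such a convolution only up to an $\ell^{1}$-error $\varepsilon$ the two distances differ by at most $\varepsilon$. I would derive \eqref{eq: cttolazyintro} and \eqref{eq: lazytoaveintro} from this together with a spectral/$L^{2}$ comparison and a maximal inequality for reversible chains.

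\emph{The two engines.} First, in $L^{2}(\pi)$, writing $P=\sum_{i}\lambda_{i}\langle\cdot,f_{i}\rangle f_{i}$ gives $\big\|\sum_{k}a_{k}(\mu P^{k}-\pi)\big\|_{2,\pi}^{2}=\sum_{i\ge2}\widehat\mu(i)^{2}\big(\sum_{k}a_{k}\lambda_{i}^{k}\big)^{2}$, and the inequality $\log x\le x-1$ yields $\big(\tfrac{1+\lambda}{2}\big)^{2m}\le e^{-m(1-\lambda)}$, hence $\|\mu P_{\laz}^{m}-\pi\|_{2,\pi}\le\|H_{m/2}^{\mu}-\pi\|_{2,\pi}$ for every $m$ (``spreading out the number of steps, with the means lined up, never hurts $L^{2}$-mixing''); analogously $\|\mu A_{m}-\pi\|_{2,\pi}$ is dominated by a lazy (resp.\ continuous-time) $L^{2}$-distance at a comparable time on the ``slow'' part of the spectrum, the extra factor $\big(\tfrac{1+\lambda}{2}\big)^{2}$ coming from the single near-lazy step in $A_{m}=P^{m}\cdot\tfrac{I+P}{2}$ damping the near-periodic eigenvalues ($\lambda\approx-1$, where one uses instead $\lambda^{2m}\big(\tfrac{1+\lambda}{2}\big)^{2}\le c\,m^{-2}$). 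Second, to convert such an $L^{2}$-bound into the claimed total-variation bound, run the chain a little longer (using part of the buffer) and use the identity $\langle H_{u}^{\mu}/\pi-1,\,H_{v}^{\mu}/\pi-1\rangle_{\pi}=\langle H_{u+v}^{\mu}/\pi-1,\,\mu/\pi-1\rangle_{\pi}$ to write $\|H_{(t/2)+r}^{\mu}-\pi\|_{2,\pi}^{2}=\langle H_{t/2}^{\mu}/\pi-1,\,H_{(t/2)+2r}^{\mu}/\pi-1\rangle_{\pi}\le 2\dc(t/2,\mu)\,\big\|H_{(t/2)+2r}^{\mu}/\pi-1\big\|_{\infty}$; the point --- and this is where reversibility is genuinely used --- is that a maximal inequality for reversible chains controls that last factor without paying the worst-case $\ell^{\infty}$-loss, so that after a square-root/AM--GM step one lands at $\dc(t/2,\mu)+O(s^{-1}\sqrt{\log s})$. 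The buffer $\lceil s\sqrt t\rceil$ is split between the extra running time $r$ used above and a margin for lining up the Binomial vs.\ Poisson means (the relevant tails being $\le e^{-cs^{2}}$, negligible against $s^{-1}\sqrt{\log s}$, since the buffer is $\asymp s$ standard deviations); the cited universal estimate $\dc(t)\ge\tfrac12 e^{-2t}$ (for $|\Omega|>1$) lets one absorb the exponentially small remainders. Optimising the split gives the absolute constant $C$; \eqref{eq: lazytoaveintro} is the parallel statement with the continuous-time chain replaced by the lazy chain; and the substitution $s:=2\vee t^{\alpha}\sqrt{\alpha\log t}$ promotes Proposition \ref{prop: AFintro} to Theorem \ref{thm: AF}.

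\emph{Main obstacle.} The routine parts are the Binomial/Poisson concentration and the spectral bookkeeping; the real difficulty is the second engine, i.e.\ passing from $L^{2}$ to total variation with an \emph{additive} error of the sharp order $s^{-1}\sqrt{\log s}$ and with \emph{absolute} constants. Neither naive route works: pulling $\|\cdot\|_{\TV}$ inside the average over the random number of steps replaces $\big\|\sum_{k}a_{k}(\mu P^{k}-\pi)\big\|_{\TV}$ by $\sum_{k}a_{k}\|\mu P^{k}-\pi\|_{\TV}$, which near cutoff is of order $1$ even when the left-hand side is tiny; and bounding $\big\|H_{(t/2)+2r}^{\mu}/\pi-1\big\|_{\infty}$ crudely costs a factor $\big(\max_{x}H_{2r}(x,x)/\pi(x)-1\big)^{1/2}$, which for a slowly mixing chain is large unless $r$ is of the order of the mixing time --- far beyond the buffer $s\sqrt t$ we may use. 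A maximal inequality tailored to reversible chains is exactly the tool that avoids these worst-case losses; keeping the extra running time $r$ of lower order than $s\sqrt t$ while still closing the $L^{2}$-to-$\TV$ gap is the crux, and the handling of the eigenvalues near $-1$ in \eqref{eq: lazytoaveintro} is a subsidiary technical point.
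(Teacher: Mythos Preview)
Your proposal correctly names the ingredients --- Poisson/Binomial concentration, an $L^{2}$ spectral comparison between $P_{\laz}^{m}$, $H_{t}$ and $A_{m}$, and a maximal inequality for reversible chains --- but the assembly breaks at precisely the step you flag as ``the crux.'' Your $L^{2}$-to-TV route arrives at
\[
\dl(t+2r,\mu)^{2}\;\le\;\|H_{(t/2)+r}^{\mu}-\pi\|_{2,\pi}^{2}\;\le\;2\,\dc(t/2,\mu)\,\bigl\|H_{(t/2)+2r}^{\mu}/\pi-1\bigr\|_{\infty},
\]
and then asserts that ``a maximal inequality tailored to reversible chains'' controls the last factor. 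It does not. Stein's maximal inequality (Corollary~\ref{cor: maxtriangle}, the one the paper actually uses) bounds $\bigl\|\sup_{k}k\,|\bigtriangleup P^{k}f|\bigr\|_{2}$ for $f\in L^{2}(\pi)$; it says nothing about $\|H_{u}^{\mu}/\pi-1\|_{\infty}$, which is the $L^{\infty}$-distance of the density and can be of order $1/\pi_{\min}$ up to the $L^{\infty}$-mixing time. No square-root/AM--GM manipulation converts $\sqrt{\dc\cdot M}$ into $\dc+O(s^{-1}\sqrt{\log s})$ without an absolute bound on $M$. Your own ``Main obstacle'' paragraph diagnoses this; what is missing is the mechanism to get around it, and the sentence ``a maximal inequality\dots is exactly the tool'' is a placeholder, not an argument.

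The paper's route is genuinely different and avoids densities altogether. It fixes a test set $B\subset\Omega$, couples the lazy chain started from $\mu$ and run for a $\mathrm{Pois}(t)$ number of steps with the stationary lazy chain (Lemma~\ref{lem: coupling}), and on the good event $G=\{$coupling succeeds and $\xi_{t}\in[t-r,t+r]\}$ (with $r=2\sqrt{2t\log s}$) reduces $\pi(B)-\Pr_{\mu}[X^{\laz}_{t+m}\in B]$ to a difference $P_{\laz}^{t+m-\xi_{t}}1_{B}(Z_{0})-P_{\laz}^{m}1_{B}(Z_{0})$ evaluated at a $\pi$-distributed point $Z_{0}$. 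By the triangle inequality this is at most
\[
r\,\sup_{i\ge r\sqrt{s}}\bigl|\bigtriangleup P_{\laz}^{i}1_{B}(Z_{0})\bigr|\;\le\;\frac{1}{\sqrt{s}}\,\sup_{i\ge r\sqrt{s}}i\,\bigl|\bigtriangleup P_{\laz}^{i}1_{B}(Z_{0})\bigr|,
\]
choosing $m=\lceil r(\sqrt{s}+1)\rceil$. \emph{Now} Stein's inequality applies: $1_{B}$ is bounded in $L^{2}(\pi)$ and $Z_{0}\sim\pi$, so the $L^{2}(\pi)$-average of the last supremum is $O(1)$, yielding the error $\sqrt{C/s}$ after Jensen; a change of variable in $s$ gives the form $Cs^{-1}\sqrt{\log s}$ in Proposition~\ref{prop: AFintro}. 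The key idea you are missing is to apply the maximal inequality to $\bigtriangleup P_{\laz}^{i}1_{B}$ at a \emph{stationary} point produced by the coupling, rather than to the density $H_{u}^{\mu}/\pi-1$.
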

We now make two remarks regarding the sharpness of \eqref{eq: lazytoaveintro}. The first concerns the error term $Cs^{-1} \sqrt{\log s}$ (and also the ``error term", $\psi_{\alpha,C_2}(\dc(t))-\dc(t)$, in \eqref{eq: AFthm1}). The second concerns the ``time-shift" term $\lceil s\sqrt{t} \rceil $. 
\begin{remark}
Denote $s=s_{n,\alpha}:=\lceil n^{0.5+\alpha}\rceil  \text{ and }t=t_{n,\alpha}:=4n+ s$. In \S~\ref{sec: examples} we construct for every $0<\alpha \le 1/2$  a sequence of chains with $\mix^{(n)}=(4 \pm o(1))n$ such that for some absolute constants $c_1,c_{2}>0$ the $n$-th chain in the sequence satisfies that \begin{equation}
\label{eq: AFremarkintro2}
d_{\av}(t+s )-\dc(t) \ge \frac{c_1}{s }   \ge \frac{c_2}{\left[ \log (1/\dc(t))\right]^{\frac{1+2\alpha}{4\alpha}}}.  
\end{equation} 
 Thus the inverse polynomial decay (w.r.t.~$s$) in (\ref{eq: lazytoaveintro}) is the correct order of decay, up to the value of the exponent.
\end{remark}

\begin{remark}
When $s$ is fixed, the ``time-shift" term $s \sqrt{t} $ in \eqref{eq: lazytoaveintro} is of order $\sqrt{t}$. This cannot be improved. To see this, consider a birth and death chain on $[n]:=\{1,2,\ldots,n\}$ with $P(i+1,i)=e^{-n}=1-P(i+1,i+2)$ for $i \in [n-2]$ and $P(1,2)=1=P(n,n-1)$. Then if $r_n=o(\sqrt{n})$ we have that $\dl(2n-r_{n})=1/2 \pm o(1)$, while  $d_{\av}(n-3)=1-o(1)$.
\end{remark}

The following proposition offers a converse to Theorem \ref{thm: AF}. The
argument in the proof of (\ref{eq: ctandlazyarefaster1intro}) is  due to Peres and Sousi (\cite[Lemma 2.3]{peres2013mixing}).\begin{proposition}
\label{prop: easydirectionintro}
Let $(\Omega,P,\pi)$ be a finite irreducible  Markov chain. Then for every $t \in \N$,  $0<s \le \sqrt{t}$ and $\mu \in \PP(\Omega)$,
\begin{equation}
\label{eq: ctandlazyarefaster1intro}
\begin{split}
 \dc(t+s \sqrt{t},\mu)  
&  \le d_{\av}(t,\mu)+e^{-
s^{2}/4}. 
\\  \dl(2t+ \lceil 2s \sqrt{t}\rceil,\mu) & \le d_{\av}(t,\mu)+e^{-
s^{2}/4}.  
\end{split}
\end{equation}
\begin{equation}
\label{eq: ctfasterthanlazy1intro}
 \dc(t+s \sqrt{t},\mu) 
\le d_{\laz}(2t,\mu)+e^{- s^{2}/2}.  
\end{equation}
\end{proposition}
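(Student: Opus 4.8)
The idea is to realise each of the three chains as the discrete‑time chain $(X_k)_{k\ge0}$ evaluated at a random time independent of $(X_k)$, and then to exploit that $\dl(\cdot,\mu)$ and $d_{\av}(\cdot,\mu)$ are non‑increasing along the discrete chain. Recall the classical identities $\h_{\mu}^{\lambda}=\E[\mu P^{N_{\lambda}}]$ with $N_{\lambda}\sim\mathrm{Poi}(\lambda)$ and $\Pr_{\laz,\mu}^{\,n}=\E[\mu P^{B_n}]$ with $B_n\sim\mathrm{Bin}(n,1/2)$; moreover, since $P_{\laz}-I=\tfrac12(P-I)$, the continuous‑time chain also equals the lazy chain subordinated to a rate‑$2$ Poisson clock, i.e.\ $\h_{\mu}^{\lambda}=\E[\mu P_{\laz}^{N_{2\lambda}}]$ with $N_{2\lambda}\sim\mathrm{Poi}(2\lambda)$. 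Finally, Pascal's rule applied to $P_{\laz}^{\,n}=\bigl(\tfrac{I+P}{2}\bigr)^{n}$ gives, for every $n\ge1$, the averaging identity $P_{\laz}^{\,n}=\sum_{m\ge0}\Pr(B_{n-1}=m)\,A_m$, which says that the lazy chain at time $n$ is the averaged chain run a $\mathrm{Bin}(n-1,1/2)$ number of steps. Reversibility plays no role.

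The monotonicity inputs are immediate: since $A_{m+1}=A_mP$, $P_{\laz}^{\,n+1}=P_{\laz}^{\,n}P_{\laz}$, $\pi$ is stationary, and total‑variation distance is non‑increasing under a stochastic matrix, both $m\mapsto d_{\av}(m,\mu)=\|\mu A_m-\pi\|_{\TV}$ and $n\mapsto\dl(n,\mu)$ are non‑increasing on $\Z_{+}$. I will use the following elementary observation three times: if $\rho=\sum_{k\ge0}r_k\nu_k$ for a probability vector $(r_k)$ on $\Z_{+}$ and probability measures $\nu_k$ with $\|\nu_k-\pi\|_{\TV}$ non‑increasing in $k$, and $T$ has law $(r_k)$, then $\|\rho-\pi\|_{\TV}\le\sum_k r_k\|\nu_k-\pi\|_{\TV}\le\|\nu_{k_0}-\pi\|_{\TV}+\Pr(T<k_0)$ for every $k_0$.

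Writing $\lambda:=t+s\sqrt t\le 2t$, each inequality follows by choosing $(\nu_k,\rho,T,k_0)$ and estimating a lower tail. For \eqref{eq: ctfasterthanlazy1intro}: $\nu_n=\mu P_{\laz}^{\,n}$, $\rho=\h_{\mu}^{\lambda}$, $T=N_{2\lambda}$, $k_0=2t$, and the Chernoff bound for the Poisson lower tail gives $\Pr(N_{2\lambda}<2t)\le e^{-s^{2}/2}$. For the second line of \eqref{eq: ctandlazyarefaster1intro}: $\nu_m=\mu A_m$, $\rho=\Pr_{\laz,\mu}^{\,n}$ with $n=2t+\lceil 2s\sqrt t\rceil$, $T=B_{n-1}$, $k_0=t$ — this is precisely the averaging identity — and Hoeffding's inequality for $B_{n-1}$ (mean $\ge t+s\sqrt t-\tfrac12$, $n-1\le 4t$) gives $\Pr(B_{n-1}<t)\le e^{-s^{2}/2}\le e^{-s^{2}/4}$. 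For the first line of \eqref{eq: ctandlazyarefaster1intro} one composes the two representations of $\h_{\mu}^{\lambda}$: letting $Y_1,Y_2,\dots$ be the fair coins underlying the binomial variables and $S:=\sum_{i=1}^{N_{2\lambda}}Y_i\sim\mathrm{Poi}(\lambda)$, the averaging identity turns $\h_{\mu}^{\lambda}$ into $\Pr(N_{2\lambda}=0)\,\mu+\sum_m\Pr(B_{N_{2\lambda}-1}=m,\ N_{2\lambda}\ge1)\,\mu A_m$, so (a version of the observation accommodating the $\Pr(N_{2\lambda}=0)\mu$ term) $\dc(\lambda,\mu)\le d_{\av}(t,\mu)+\Pr(B_{N_{2\lambda}-1}<t)+\Pr(N_{2\lambda}=0)$. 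The key point is the pathwise inequality $B_{N_{2\lambda}-1}=S-Y_{N_{2\lambda}}\ge S-1$ on $\{N_{2\lambda}\ge1\}$, which yields $\Pr(B_{N_{2\lambda}-1}<t)\le\Pr(S\le t)=\Pr(\mathrm{Poi}(\lambda)\le t)$; combined with $\Pr(N_{2\lambda}=0)=e^{-2\lambda}$ and the sharp (relative‑entropy) form of the Poisson tail bound, $\Pr(\mathrm{Poi}(\lambda)\le t)\le\exp\!\bigl(-\lambda\,h(s\sqrt t/\lambda)\bigr)$ with $h(x)=x+(1-x)\ln(1-x)\ge x^{2}/2$, a short computation gives total error $\le e^{-s^{2}/4}$.

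\textbf{Main obstacle.} The delicate step is this last inequality. Unlike the lazy chain — whose kernel $P_{\laz}=(I+P)/2$ literally contains a $\mathrm{Bernoulli}(1/2)$, so that $P_{\laz}^{\,n}$ is an honest mixture of the averaged‑chain kernels $A_m$ — the continuous‑time chain carries no such intrinsic coin flip (indeed $\mathrm{Poi}(\lambda)$ is not the convolution of $\mathrm{Bernoulli}(1/2)$ with any distribution, as the pole at $z=-1$ of $2e^{\lambda(z-1)}/(1+z)$ shows), which forces the detour through the lazy chain and hence the composition of two random‑time representations. The price is the small boundary term $e^{-2\lambda}$ and the need to invoke the sharp Poisson tail bound rather than its $e^{-a^{2}/2\lambda}$ weakening in order to recover precisely the constant $e^{-s^{2}/4}$; for $s$ small this is immaterial, since the asserted inequality is then nearly trivial ($e^{-s^{2}/4}$ being close to $1$). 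Everything else — the monotonicity statements, the averaging identity, and the Poisson and binomial tail estimates — is routine.
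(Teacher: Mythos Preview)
Your argument is essentially the paper's: both extract the independent $\mathrm{Bernoulli}(\tfrac12)$ needed for $A_m$ from $\mathrm{Bin}(n,\tfrac12)=\mathrm{Bin}(n-1,\tfrac12)+\mathrm{Ber}(\tfrac12)$ --- your averaging identity $P_{\laz}^{\,n}=\sum_m\Pr(B_{n-1}=m)\,A_m$ is the paper's construction $Z=Z_1+\eta\,1_{Y>0}$ with $Y\sim\mathrm{Poi}(2\lambda)$, $Z_1\mid Y\sim\mathrm{Bin}((Y-1)\vee0,\tfrac12)$, $\eta\sim\mathrm{Ber}(\tfrac12)$ independent (so your $(N_{2\lambda},B_{N_{2\lambda}-1},Y_{N_{2\lambda}},S)$ are the paper's $(Y,Z_1,\eta,Z)$) --- and where you use convexity plus monotonicity of $m\mapsto d_{\av}(m,\mu)$, the paper uses the equivalent Claim~\ref{clm: independentTV}.

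One simplification for the first line of \eqref{eq: ctandlazyarefaster1intro}: the detour through the sharp relative-entropy Poisson bound is unnecessary. Your two error events $\{N_{2\lambda}=0\}$ and $\{B_{N_{2\lambda}-1}<t,\,N_{2\lambda}\ge1\}$ are disjoint, and the same pathwise inequality shows the second lies in $\{S\le t,\,N_{2\lambda}\ge1\}$ while the first lies in $\{S=0\}\subset\{S\le t,\,N_{2\lambda}=0\}$; hence their total probability is at most $\Pr(S\le t)\le e^{-s^{2}t/(2\lambda)}\le e^{-s^{2}/4}$ by the ordinary Chernoff bound (Fact~\ref{fact: LD}). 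This is exactly the paper's one-line step $\Pr[Z_1<t]\le\Pr[Z\le t]$, which automatically absorbs the case $Y=0$.
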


\begin{remark}
\label{rem: abelian}
In \cite{Widder} p.~195, it is written: "a theorem is Abelian if it says something about an average of a sequence from a hypothesis about its ordinary limit; it is Tauberian if conversely the implication goes from average to limit".

 Proposition \ref{prop: easydirectionintro}  is easier and more general than our Theorem \ref{thm: AF} (as it does not assume reversibility) because it is an Abelian theorem, while our Theorem \ref{thm: AF} is Tauberian, hence requires the reversibility assumption, as we now demonstrate.
One (non-reversible) instance in which (\ref{eq: lazytoaveintro}) fails is a biased random walk on the $n$-cycle with $P(i,i-1)=n^{-\ell}=1-P(i,i+1)$, where $i-1$ and $i+1$ are defined modulo $n$ and $\ell>0$ is arbitrary. In this example $\tl(\epsilon)/(n^2 | \log  \epsilon| ) =\Theta (1) $, however $\ave (\epsilon)/(n^{\ell+2} | \log  \epsilon| ) = \Theta (1)$ (uniformly in $\epsilon \in (0,1/2] $). 
\end{remark}
Next, consider a sequence of such chains, $((\Omega_n,P_n,\pi_n): n \in \N)$,
each with its corresponding
worst-distance from stationarity $d_n(t)$, its mixing-time $t_{\mathrm{c}}^{(n)}$,
etc..
Loosely speaking, the (total-variation) \emph{\textbf{cutoff phenomenon}}
 occurs when over a negligible period of time, known as the \emph{\textbf{cutoff
window}}, the worst-case total-variation distance  drops abruptly from a value
close to 1 to near $0$. In other words, one should run the $n$-th chain until
the cutoff time for it to even slightly mix in total-variation, whereas
running it any further is essentially redundant.
Formally, we say that a sequence of chains exhibits a \emph{\textbf{continuous-time
cutoff}}
if the
following
sharp transition in its convergence to stationarity occurs:
$$\lim_{n \to \infty}t_{\mathrm{c}}^{(n)}(\epsilon)/t_{\mathrm{c}}^{(n)}(1-\epsilon)=1, \quad
\text{for every }0<\epsilon <1.$$
We say that a sequence of chains exhibits an \emph{\textbf{averaged
cutoff}} (resp.~\emph{\textbf{lazy
cutoff}})
if
$$\lim_{n \to \infty}t_{\mathrm{ave}}^{(n)}(\epsilon)/t_{\mathrm{ave}}^{(n)}(1-\epsilon)=1\, \text{ (resp., }\lim_{n \to \infty}t_{\mathrm{L}}^{(n)}(\epsilon)/t_\mathrm{L}^{(n)}(1-\epsilon)=1),
\text{ for every }0<\epsilon <1.$$
The following corollary follows at once from Theorem \ref{thm: AF} together with Proposition \ref{prop: easydirectionintro}.
\begin{corollary}
\label{cor: cutoff}
Let $(\Omega_{n},P_{n},\pi_{n})$ be a sequence of finite irreducible reversible Markov chains. Then the following are equivalent
\begin{itemize}
\item[(i)] The sequence exhibits a continuous-time cutoff.

\item[(ii)] The sequence exhibits a lazy cutoff.

\item[(iii)] The sequence exhibits an averaged cutoff.
\end{itemize}
Moreover, if (i) holds, then $\lim_{n \to \infty}t_{\mathrm{ave}}^{(n)}/\mix^{(n)}=\lim_{n \to \infty} t_{\mathrm{L}}^{(n)}/(2\mix^{(n)})=1$.
\end{corollary}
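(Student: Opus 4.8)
\textbf{Proof proposal for Corollary \ref{cor: cutoff}.}

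The plan is to show the cyclic chain of implications between (i), (ii), (iii) plus the quantitative mixing-time asymptotics, using Theorem \ref{thm: AF} and Proposition \ref{prop: easydirectionintro} as black boxes. The core observation is that both results, when combined, give two-sided comparisons of the form $d_{\av}$, $\dl$ and $\dc$ are each bounded by a shifted-and-perturbed version of another, where the time-shift is $o(t)$ and the additive error is $o(1)$ uniformly. Concretely, I would first record the consequence at the level of $\epsilon$-mixing times: applying \eqref{eq: main3} with a suitable choice of $\alpha$ (say $\alpha=1/4$) and $t=\mix^{(n)}(\epsilon)$ gives, via Remark \ref{rem: alternativephrasingofAF}, that $\ave^{(n)}(\psi_{\alpha,C_2}(\epsilon)) \le \phi_{\alpha,C_1}(\mix^{(n)}(\epsilon))$, and since $\psi_{\alpha,C_2}(\epsilon)\to\epsilon$ is irrelevant at the level of cutoff (one uses that $\epsilon$ ranges over $(0,1)$), what matters is $\phi_{\alpha,C_1}(t)=t+O(t^{3/4}\sqrt{\log t})=t(1+o(1))$. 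The reverse bound \eqref{eq: ctandlazyarefaster1intro} of Proposition \ref{prop: easydirectionintro} gives $\mix^{(n)}(\epsilon') \le \ave^{(n)}(\epsilon'')+ O(\sqrt{\ave^{(n)}(\epsilon'')})$ for appropriate $\epsilon',\epsilon''$, again a $(1+o(1))$ multiplicative relation. Running the analogous argument with \eqref{eq: main1}--\eqref{eq: main2} and \eqref{eq: ctfasterthanlazy1intro} ties $\dl$ into the picture, with the factor-$2$ time rescaling (continuous time $t/2$ versus lazy $2t$) accounting for the $2\mix^{(n)}$ in the final display.

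Next I would turn these into the cutoff equivalences. For (i)$\Rightarrow$(iii): assume $\mix^{(n)}(\epsilon)/\mix^{(n)}(1-\epsilon)\to 1$ for every $\epsilon$. Fix $0<\epsilon<1/4$. One sandwiches $\ave^{(n)}(\epsilon)$ between (a lower bound coming from the converse Proposition \ref{prop: easydirectionintro}, which forces $\ave^{(n)}$ not much smaller than $\mix^{(n)}$ up to $O(\sqrt{\cdot})$) and (an upper bound coming from \eqref{eq: main3}, giving $\ave^{(n)}(\epsilon)\le\phi_{\alpha,C_3}(\mix^{(n)}(\epsilon'))$ for some $\epsilon'$ slightly smaller than $\epsilon$, once the additive $2C_2 t^{-\alpha}$ error is absorbed — this requires $\mix^{(n)}\to\infty$, which holds because the chains are distinct; if $\mix^{(n)}$ stayed bounded along a subsequence the statement is vacuous/trivial). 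Dividing the upper bound for $\ave^{(n)}(\epsilon)$ by the lower bound for $\ave^{(n)}(1-\epsilon)$ and using $\phi_{\alpha,C}(t)\sim t$ together with the assumed continuous-time cutoff ratio $\to 1$ yields $\ave^{(n)}(\epsilon)/\ave^{(n)}(1-\epsilon)\to 1$. The implications (iii)$\Rightarrow$(i), (i)$\Leftrightarrow$(ii) are entirely parallel, just swapping which of \eqref{eq: main1}--\eqref{eq: main3}, \eqref{eq: ctandlazyarefaster1intro}--\eqref{eq: ctfasterthanlazy1intro} one invokes. Finally the ``Moreover'' statement: take $\epsilon=1/4$ throughout; the same sandwich at a fixed $\epsilon$, now also using that under cutoff $\mix^{(n)}(\epsilon)=(1+o(1))\mix^{(n)}$ for every fixed $\epsilon$, collapses the bounds to $\ave^{(n)}=(1+o(1))\mix^{(n)}$ and $\tl^{(n)}=(1+o(1))\cdot 2\mix^{(n)}$ (the factor $2$ again from the $t/2$ vs.\ $2t$ time-normalization in \eqref{eq: main1} and \eqref{eq: ctfasterthanlazy1intro}).

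The step I expect to require the most care is the bookkeeping of the $\epsilon$-shifts and the absorption of the additive error terms. Inequalities like \eqref{eq: main3} produce, at time $\phi_{\alpha,C_3}(t)$, a distance bounded by $\dc(t,\mu)+2C_2 t^{-\alpha}$; to read off a clean statement about $\ave^{(n)}(\epsilon)$ one sets $t=\mix^{(n)}(\epsilon-2C_2 t^{-\alpha})$, which is a mild fixed-point issue one resolves by noting $2C_2 t^{-\alpha}\to 0$ as $n\to\infty$ (since $\mix^{(n)}\to\infty$), so that for all large $n$ one may use, say, $t=\mix^{(n)}(\epsilon/2)$ and still have $\epsilon/2 + 2C_2 t^{-\alpha} \le \epsilon$. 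Then $\ave^{(n)}(\epsilon) \le \phi_{\alpha,C_3}(\mix^{(n)}(\epsilon/2)) = (1+o(1))\mix^{(n)}(\epsilon/2)$. Since $\epsilon$ was arbitrary in $(0,1)$ and the continuous-time cutoff hypothesis is quantified over all $\epsilon$, replacing $\epsilon/2$ by a fresh $\epsilon$ is harmless for deducing the ratio $\to 1$; but one must state this quantifier juggling carefully rather than hand-wave it. Everything else — the $\phi_{\alpha,C}(t)\sim t$ and $\psi_{\alpha,C}=o(1)$ facts from Remark \ref{rem: simplifying}, monotonicity of $d(\cdot)$ in $t$, and monotonicity of $t(\cdot)$ in $\epsilon$ — is routine. $\qed$
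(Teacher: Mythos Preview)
Your proposal is correct and follows exactly the route the paper indicates: the paper simply asserts that the corollary ``follows at once from Theorem \ref{thm: AF} together with Proposition \ref{prop: easydirectionintro}'' without spelling out any details. Your write-up is a faithful (and more explicit) unpacking of that one-liner, including the $\epsilon$-bookkeeping and the handling of the factor $2$ between lazy and continuous time.
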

\begin{remark}
The equivalence between (i) and (iii) was previously unknown. In \cite{chen2013comparison} it was shown that (i) and (ii) are equivalent even without the assumption of reversibility.
\end{remark}
Our last point of comparison  is related to the width of the
\emph{\textbf{cutoff window}}. We say that a sequence of chains exhibits a continuous-time (resp.~averaged) cutoff with a cutoff window 
$w_n$ if $w_n=o(\mix^{(n)})$ (resp.~$w_n=o(\ave^{(n)})$) and for every $0<\epsilon \le 1/4$ there exists some constant $C_{\epsilon}>0$ (depending only on $\epsilon$) such that 
$$\forall n, \quad \mix^{(n)}(\epsilon)-\mix^{(n)}(1-\epsilon) \le C_{\epsilon}w_n \quad (\text{resp. }\ave^{(n)}(\epsilon)-\ave^{(n)}(1-\epsilon) \le C_{\epsilon}w_n).$$
One can define the notion of a cutoff window for a sequence of associated lazy chains in an analogous manner. Note that the window defined in this manner is not unique.
\begin{maintheorem}
\label{thm: window}
Let $(\Omega_{n},P_{n},\pi_{n})$ be a sequence of finite irreducible reversible Markov chains.
\begin{itemize}
\item[(i)]
 Assume that the sequence exhibits a continuous-time cutoff with a window $w_n$. Then it  exhibits also an averaged cutoff with a window $w_n$.
\item[(ii)] Assume that the sequence exhibits an averaged cutoff with a window $w_n$. Then it exhibits also a continuous-time cutoff with a window $w'_n:=w_n \vee \sqrt{\mix^{(n)}} $. 
\end{itemize}
\end{maintheorem}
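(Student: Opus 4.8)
The plan is to prove both parts by sandwiching the $\epsilon$- and $(1-\epsilon)$-mixing-times of one chain between those of the other (shifted by a term of order $\sqrt{\mix^{(n)}}$ and degraded in accuracy by a multiplicative constant), and then subtracting so that the shifts either cancel or get absorbed into the declared window. Throughout I use Corollary \ref{cor: cutoff}: under either hypothesis the sequence has a continuous-time cutoff and $\ave^{(n)}\sim\mix^{(n)}$, so $\sqrt{\ave^{(n)}(\delta)}$, $\sqrt{\tl^{(n)}(\delta)}/\sqrt2$ and $\sqrt{\mix^{(n)}(\delta)}$ are all of order $\sqrt{\mix^{(n)}}$ for every fixed $\delta$; I also assume $\mix^{(n)}\to\infty$ (the only non-degenerate case). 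I will use that $\dc$ is continuous and non-increasing and that $d_{\av}$ is non-increasing (since $A_{t+1}=A_tP$ and $\pi P=\pi$), so $\dc(\mix^{(n)}(\delta))=\delta$ and $d_{\av}(\ave^{(n)}(\delta))\le\delta$.

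\emph{Part (i).} The key preliminary is that a continuous-time cutoff window is automatically of order at least $\sqrt{\mix^{(n)}}$. Chaining \eqref{eq: cttolazyintro} (read as $\dc(v)\ge\dl(2v+\lceil s\sqrt{2v}\rceil)-Cs^{-1}\sqrt{\log s}$) with \eqref{eq: ctfasterthanlazy1intro} (read as $\dl(2u)\ge\dc(u+s\sqrt u)-e^{-s^2/2}$) and optimizing, one gets: for every $\delta>0$ there is $c_\delta>0$ with $\dc(v)-\dc(v+c_\delta\sqrt v)\le\delta$ for all large $v$. Taking $\delta=1-2\epsilon$ and $v=\mix^{(n)}(1-\epsilon)$ forces $\mix^{(n)}(\epsilon)\ge\mix^{(n)}(1-\epsilon)+c_\delta\sqrt{\mix^{(n)}(1-\epsilon)}$, so the window definition yields $w_n\ge c\sqrt{\mix^{(n)}}$ with $c=c(\epsilon)>0$. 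Next, composing \eqref{eq: cttolazyintro} and \eqref{eq: lazytoaveintro} with a large \emph{constant} $s=s(\epsilon)$ (so the two error terms sum to at most $\epsilon/2$), started from $t=2\mix^{(n)}(\epsilon/2)$, gives $\ave^{(n)}(\epsilon)\le\mix^{(n)}(\epsilon/2)+C_\epsilon\sqrt{\mix^{(n)}}$. Symmetrically, applying the first inequality of \eqref{eq: ctandlazyarefaster1intro} at $T:=\ave^{(n)}(1-\epsilon)$ with $s=s(\epsilon)$ chosen so $e^{-s^2/4}\le\epsilon/2$ gives $\dc(T+s\sqrt T)\le1-\epsilon/2$, hence $\mix^{(n)}(1-\epsilon/2)\le T+s\sqrt T$, i.e. $\ave^{(n)}(1-\epsilon)\ge\mix^{(n)}(1-\epsilon/2)-C_\epsilon'\sqrt{\mix^{(n)}}$. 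Subtracting, bounding $\mix^{(n)}(\epsilon/2)-\mix^{(n)}(1-\epsilon/2)\le C_{\epsilon/2}w_n$ by the continuous-time window, and then using $\sqrt{\mix^{(n)}}\le c^{-1}w_n$, we obtain $\ave^{(n)}(\epsilon)-\ave^{(n)}(1-\epsilon)\le C_\epsilon'' w_n$; together with $w_n=o(\ave^{(n)})$ (as $\ave^{(n)}\sim\mix^{(n)}$) this is exactly an averaged cutoff with window $w_n$.

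\emph{Part (ii).} The argument is the mirror image, but the $\sqrt{\mix^{(n)}}$ term does not cancel: the averaged distance need not be Lipschitz at scale $\sqrt{\mix^{(n)}}$ (the birth–and–death example in the remark after Proposition \ref{prop: AFintro} shows it can drop in $O(1)$ steps), so no analogue of the preliminary lemma above is available for $d_{\av}$. First, applying \eqref{eq: ctandlazyarefaster1intro} at $t=\ave^{(n)}(\epsilon/2)$ with $s=s(\epsilon)$ large gives $\mix^{(n)}(\epsilon)\le\ave^{(n)}(\epsilon/2)+C_\epsilon\sqrt{\mix^{(n)}}$. For the lower bound, chain \eqref{eq: cttolazyintro} and \eqref{eq: lazytoaveintro} in the lower-bound direction with a large constant $s=s(\epsilon)$ to get $\dc(v)\ge d_{\av}\big(v+C_\epsilon\lceil\sqrt v\rceil\big)-\epsilon/2$ for all large $v$; taking $v$ just below $\ave^{(n)}(1-\epsilon/2)-C_\epsilon\lceil\sqrt{\mix^{(n)}}\rceil$ forces $\dc(v)>1-\epsilon$, hence $\mix^{(n)}(1-\epsilon)\ge\ave^{(n)}(1-\epsilon/2)-C_\epsilon'\sqrt{\mix^{(n)}}$. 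Subtracting and invoking the hypothesized averaged window, $\mix^{(n)}(\epsilon)-\mix^{(n)}(1-\epsilon)\le C_{\epsilon/2}^{\mathrm{ave}}w_n+2C_\epsilon'\sqrt{\mix^{(n)}}\le C_\epsilon'''(w_n\vee\sqrt{\mix^{(n)}})=C_\epsilon''' w_n'$; and $w_n'=o(\mix^{(n)})$ since $w_n=o(\ave^{(n)})=o(\mix^{(n)})$ and $\mix^{(n)}\to\infty$. This is a continuous-time cutoff with window $w_n'$.

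\emph{Main obstacle.} Every transfer inequality in Propositions \ref{prop: AFintro} and \ref{prop: easydirectionintro} shifts time by an (accuracy-dependent constant)$\,\times\sqrt t$ term which is genuinely unavoidable (the remarks after Proposition \ref{prop: AFintro}); so the real content is the asymmetry between the two parts. For part (ii) this term simply enlarges the window, which is why $w_n\vee\sqrt{\mix^{(n)}}$ appears and cannot be removed. For part (i) one must instead show that it is already subsumed by $w_n$, i.e. that a continuous-time cutoff window is never $o(\sqrt{\mix^{(n)}})$; this is the one essentially new ingredient, and the cleanest route is the self-improving estimate $\dc(v)-\dc(v+c_\delta\sqrt v)\le\delta$ extracted from the continuous-to-lazy and lazy-to-continuous comparisons (which quantitatively reflects the $\sqrt{\,\cdot\,}$-scale smoothing built into the heat kernel). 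The rest is bookkeeping: keeping $\ave^{(n)}$, $\tl^{(n)}/2$ and $\mix^{(n)}$ comparable via Corollary \ref{cor: cutoff} so that all error terms are uniformly of order $\sqrt{\mix^{(n)}}$, handling the parity/integer-rounding when composing the discrete-time inequalities, and noting that $\mix^{(n)}\to\infty$ may be assumed.
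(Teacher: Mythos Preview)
Your overall architecture (sandwich $\ave^{(n)}(\epsilon)$ and $\ave^{(n)}(1-\epsilon)$ between the corresponding continuous-time quantities up to $O(\sqrt{\mix^{(n)}})$ errors, then subtract) is exactly what the paper does in \S\ref{s: 5}, and parts of your argument after the preliminary are fine. The paper, however, does not attempt to reprove the key preliminary fact that a continuous-time cutoff window satisfies $w_n=\Omega(\sqrt{\mix^{(n)}})$; it simply invokes Proposition~\ref{prop: sqrt} (Chen--Saloff-Coste). Your attempted derivation of this fact has a genuine gap.

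Concretely: you claim that chaining \eqref{eq: cttolazyintro} and \eqref{eq: ctfasterthanlazy1intro} yields $\dc(v)-\dc(v+c_\delta\sqrt v)\le\delta$. But with $2u=2v+\lceil s\sqrt{2v}\rceil$ your chain gives
\[
\dc(v)\ \ge\ \dl(2u)-Cs^{-1}\sqrt{\log s}\ \ge\ \dc(u+s\sqrt u)-Cs^{-1}\sqrt{\log s}-e^{-s^2/2},
\]
i.e.\ $\dc(v)-\dc(v+c\sqrt v)\ge -\delta$, which is the \emph{trivial} direction (already implied by monotonicity of $\dc$). Every transfer inequality in Propositions~\ref{prop: AFintro} and~\ref{prop: easydirectionintro} has the form ``distance at a later time $\le$ distance at an earlier time $+$ error''; composing such inequalities can only ever lower-bound $\dc(\text{early})-\dc(\text{late})$, never upper-bound it. So the bound you need does not follow from these propositions, and your ``self-improving estimate'' is not actually extracted from them.

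To repair part~(i) you must either cite Proposition~\ref{prop: sqrt} as the paper does, or supply a genuinely different argument; e.g.\ one can note that for each $x$,
\[
\bigl|\dc(t,x)-\dc(t+c\sqrt t,x)\bigr|\ \le\ \|\h_x^{t}-\h_x^{t+c\sqrt t}\|_{\TV}\ \le\ \|\mathrm{Pois}(t)-\mathrm{Pois}(t+c\sqrt t)\|_{\TV},
\]
and the right-hand side is bounded (uniformly in large $t$) by a function of $c$ tending to $0$ as $c\to 0$, which does give the needed Lipschitz-at-scale-$\sqrt t$ behaviour of $\dc$. But that is a separate computation, not a consequence of the two inequalities you invoked.
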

Theorem \ref{thm: window} follows easily from Propositions \ref{prop: AFintro} and \ref{prop: easydirectionintro} in conjunction with the following result. We prove Theorem \ref{thm: window} in \S~\ref{s: 5} for the sake of completeness.
\begin{proposition}[\cite{chen2013comparison} Chen and Saloff-Coste]
\label{prop: sqrt}
Let $(\Omega_{n},P_{n},\pi_{n})$ be a sequence of finite irreducible reversible Markov chains. The sequence exhibits a continuous-time cutoff with a window $w_n$ iff it exhibits a lazy  cutoff with a window $w_n$, in which case $w_n = \Omega \left( \sqrt{\mix^{(n)}} \right) $.
\end{proposition}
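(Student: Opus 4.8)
The plan is to derive Proposition~\ref{prop: sqrt} from the two‑sided comparison between $\dc(\cdot)$ and $\dl(\cdot)$ already contained in Propositions~\ref{prop: AFintro} and~\ref{prop: easydirectionintro}, together with one short ``equicontinuity in time'' estimate which supplies the lower bound $w_n=\Omega(\sqrt{\mix^{(n)}})$. (Throughout we may assume $\mix^{(n)}\to\infty$; bounded mixing times are a degenerate case for which there is nothing to prove.) This reconstructs, inside the framework of the present paper, the argument of Chen and Saloff‑Coste \cite{chen2013comparison}.

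\emph{Step 1 (a sandwich).} From \eqref{eq: ctfasterthanlazy1intro} directly, and from \eqref{eq: cttolazyintro} after the substitution $t\mapsto 2t$ and a rearrangement, one obtains an absolute constant $K$ such that, writing $\eta(s):=e^{-s^{2}/2}\vee Cs^{-1}\sqrt{\log s}$ with $C$ the constant of Proposition~\ref{prop: AFintro} (so $\eta(s)\to 0$), for all large $t$ and all $s\ge 2$,
\[
\dc(t+Ks\sqrt{t})-\eta(s)\;\le\;\dl(2t)\;\le\;\dc(t-Ks\sqrt{t})+\eta(s).
\]
Fix $0<\epsilon\le\tfrac12$ and choose $s=s(\epsilon)$ with $\eta(s)\le\epsilon/2$. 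The right inequality gives $\dl(2t)\le\epsilon$ once $t-Ks\sqrt{t}\ge\mix^{(n)}(\epsilon/2)$, hence $t_{\mathrm L}^{(n)}(\epsilon)\le 2\mix^{(n)}(\epsilon/2)+C_{\epsilon}\sqrt{\mix^{(n)}}$; the left inequality gives $\dl(2t)\ge 1-\epsilon$ whenever $t+Ks\sqrt{t}\le\mix^{(n)}(1-\epsilon/2)$, hence $t_{\mathrm L}^{(n)}(1-\epsilon)\ge 2\mix^{(n)}(1-\epsilon/2)-C_{\epsilon}\sqrt{\mix^{(n)}}$ (using $\mix^{(n)}(\delta)\asymp\mix^{(n)}$ under cutoff to control the square‑root terms). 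In particular $t_{\mathrm L}^{(n)}\sim 2\mix^{(n)}$ under cutoff. The mirror‑image estimates, with $\dc$ and $\dl$, and $\mix^{(n)}$ and $t_{\mathrm L}^{(n)}$, interchanged, come out of the same two inequalities read in the opposite orientation.

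\emph{Step 2 (the window is $\Omega(\sqrt{\mix^{(n)}})$).} Running the discrete chain $(X_k)$ from $x$ and recalling that $H_t(x,\cdot)$ is the law of $X_N$ with $N\sim\mathrm{Poisson}(t)$, the optimal coupling of two Poisson counts (with the same realization of $(X_k)$) gives $\|H_t(x,\cdot)-H_{t'}(x,\cdot)\|_{\TV}\le\|\mathrm{Poisson}(t)-\mathrm{Poisson}(t')\|_{\TV}$, which for $|t'-t|\le a\sqrt{t}$ is at most $g(a):=\sup_{u\ge 1}\|\mathrm{Poisson}(u)-\mathrm{Poisson}(u+a\sqrt{u})\|_{\TV}$; and $g(a)\to 0$ as $a\downarrow 0$, since $\tfrac{d}{du}(e^{-u}u^{k}/k!)=e^{-u}u^{k-1}/(k-1)!-e^{-u}u^{k}/k!$ while a $\mathrm{Poisson}(u)$ mass function has maximum $O(u^{-1/2})$. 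Hence $\dc(t+a\sqrt{t})\ge\dc(t)-g(a)$ for every $t$. Choosing $a$ with $g(a)<\tfrac12$ and applying this at $t_{1}:=\mix^{(n)}(7/8)$ (so $\dc(t_{1})=7/8$ by continuity of $\dc$) yields $\dc(t_{1}+a\sqrt{t_{1}})>1/8$, so $\mix^{(n)}(1/8)-\mix^{(n)}(7/8)\ge a\sqrt{t_{1}}$; since $t_{1}\ge\tfrac12\mix^{(n)}$ eventually (cutoff), any admissible window obeys $w_n\ge C_{1/8}^{-1}a\sqrt{t_{1}}=\Omega(\sqrt{\mix^{(n)}})$. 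The identical argument with $\mathrm{Binomial}(m,\tfrac12)$ in place of $\mathrm{Poisson}$ — using $\|P_{\laz}^{m}(x,\cdot)-P_{\laz}^{m'}(x,\cdot)\|_{\TV}\le\|\mathrm{Binomial}(m,\tfrac12)-\mathrm{Binomial}(m',\tfrac12)\|_{\TV}$ and that a $\mathrm{Binomial}(m,\tfrac12)$ mass function has maximum $O(m^{-1/2})$ — shows that any admissible window for lazy cutoff is $\Omega(\sqrt{t_{\mathrm L}^{(n)}})=\Omega(\sqrt{\mix^{(n)}})$.

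\emph{Step 3 (the equivalence) and the main obstacle.} Assume continuous‑time cutoff with window $w_n$. By Step~2, $\sqrt{\mix^{(n)}}=O(w_n)$, and $t_{\mathrm L}^{(n)}\sim 2\mix^{(n)}$ gives $w_n=o(\mix^{(n)})=o(t_{\mathrm L}^{(n)})$; Step~1 then yields, for $0<\epsilon\le\tfrac12$,
\[
t_{\mathrm L}^{(n)}(\epsilon)-t_{\mathrm L}^{(n)}(1-\epsilon)\;\le\;2\bigl(\mix^{(n)}(\epsilon/2)-\mix^{(n)}(1-\epsilon/2)\bigr)+2C_{\epsilon}\sqrt{\mix^{(n)}}\;\le\;C'_{\epsilon}\,w_n,
\]
the last step by the definition of the window (at level $\epsilon/2\le\tfrac14$) together with $\sqrt{\mix^{(n)}}=O(w_n)$; this is exactly lazy cutoff with window $w_n$. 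The converse is the same computation with $\dc\leftrightarrow\dl$ and $\mix^{(n)}\leftrightarrow t_{\mathrm L}^{(n)}$, again invoking Step~2 (now in its Binomial form) to absorb the $\sqrt{\mix^{(n)}}$ error into $w_n$. This establishes both the ``iff'' and the clause $w_n=\Omega(\sqrt{\mix^{(n)}})$. I expect the main difficulty to be organizational rather than a single hard lemma: one must keep the time‑shifts $\pm Ks\sqrt{t}$ and the halved thresholds $\epsilon/2$ consistent across all four one‑sided comparisons, and must verify $t_{\mathrm L}^{(n)}(\epsilon)\sim 2\mix^{(n)}(\epsilon)$ under cutoff so that ``$o(\mix^{(n)})$'' and ``$o(t_{\mathrm L}^{(n)})$'' genuinely coincide. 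The only conceptually new input is the equicontinuity bound $g(a)\to 0$ of Step~2, whose content is that Poisson (resp.\ Binomial) time‑randomization already smears the total‑variation profile on the $\sqrt{t}$ scale, so no finer window can exist.
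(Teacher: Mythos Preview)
The paper does not actually prove Proposition~\ref{prop: sqrt}; it is quoted from Chen and Saloff--Coste \cite{chen2013comparison} and used as a black box in \S\ref{s: 5}. So there is no ``paper's own proof'' to compare against. Your proposal supplies a self-contained argument built from the paper's Propositions~\ref{prop: AFintro} and~\ref{prop: easydirectionintro}, and it is essentially correct: the two-sided sandwich in Step~1 is a legitimate rearrangement of \eqref{eq: cttolazyintro} and \eqref{eq: ctfasterthanlazy1intro}; the equicontinuity estimate in Step~2 (that $\|\mathrm{Poisson}(t)-\mathrm{Poisson}(t+a\sqrt{t})\|_{\TV}$ and its Binomial analogue are $O(a)$ uniformly in $t$) is standard and your sketch via $\tfrac{d}{du}p_u(k)=p_u(k-1)-p_u(k)$ together with unimodality is the right mechanism; and Step~3 assembles these correctly.

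Two remarks worth flagging. First, your route is not the Chen--Saloff--Coste route: their argument does not require reversibility (see the remark following Corollary~\ref{cor: cutoff}), whereas yours does, because \eqref{eq: cttolazyintro} rests on Stein's maximal inequality and hence on self-adjointness. For the proposition \emph{as stated here} (reversible chains) this is harmless, but you should be aware you are proving a weaker theorem than the one cited. Second, there is a mild logical asymmetry: the paper invokes Proposition~\ref{prop: sqrt} in \S\ref{s: 5} precisely to absorb the $\sqrt{\mix^{(n)}}$ error terms coming out of Propositions~\ref{prop: AFintro} and~\ref{prop: easydirectionintro}, while you are using those same propositions to re-derive Proposition~\ref{prop: sqrt}. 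This is not circular---the independent ingredient that breaks the loop is exactly your Step~2 equicontinuity bound, which neither proposition supplies---but it means the genuinely new content of your proof is Step~2, and the rest is the paper's own machinery rearranged.
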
 
\begin{remark}
There are cases in which the cutoff window for the sequence of the associated averaged chains can be much smaller than that of the associated continuous-time chains. For instance, let $G_n$ be a sequence of random $n$-vertex $d_n$-regular graphs, for some $d_n$ such that $\log n \ll d_n \log d_n =n^{o(1)} $. Let $(X_t^{(n)})_{t \in \Z_+ }$ be the sequence of discrete-time simple random walks on $G_n$. Then \cite{LS} w.h.p.~(i.e.~with probability $1-o(1)$, over the choice of the graphs) \[|t_{\mathrm{ave}}^{(n)}(\epsilon)-\lceil \log_{d_{n}-1}(d_{n}n) \rceil | \le 1 , \quad \text{ for every }\epsilon \in (0,1),\] while the cutoff window for the sequence of associated  continuous-time chains is $\sqrt{ \log_{d_{n}-1}n} $.  
\end{remark}
 \subsection{Related work}
\label{s:related}

This work was greatly motivated by the results of Peres and Sousi in \cite{peres2013mixing} about the averaged chain.  Their approach relied on the theory of random times to stationarity combined with a certain ``de-randomization" argument which shows that for every finite irreducible reversible Markov chain and every stopping time $T$ such that $X_{T} \sim \pi$, $t_{\mathrm{ave}} \le 220\max_{x \in \Omega} \mathbb{E}_x[T]$. As a  consequence, they showed that for all $\alpha \in (0, 1/2)$ (this was extended to $\alpha=1/2$ in \cite{griffiths2014tight}), there exist  constants $c_{\alpha},c'_{\alpha}>0$ such that for every lazy finite irreducible reversible chain \[c'_\alpha t_{\mathrm{H}}(\alpha) \leq t_{\mathrm{ave}} \leq c_\alpha t_{\mathrm{H}}(\alpha), \quad \text{where} \]
\[ t_{\mathrm{H}}(\alpha):=\max_{x
\in \Omega,A \subset \Omega :\,\pi(A)
\ge \alpha}\mathbb{E}_{x}[T_{A}] \quad \text{and} \quad T_{A}:=\inf\{t:X_t \in A \}.\] 
Using this, they showed that there exist some absolute constants $c_1,c_2>0$ such that $$c_1\tl \leq t_{\mathrm{ave}}\leq c_{2}\tl .$$  Implicitly, they showed that $\text{for every }0<\epsilon \le 1/4 \text{ and }0 < \alpha \le 1/2$, $$t_{\mathrm{ave}}(\epsilon) \leq c_{\alpha}\epsilon^{-4}t_{\mathrm{H}}(\alpha) .$$ This was the first progress towards resolving Aldous and Fill's Open Problem. Alas, this is too coarse for the purpose of resolving it.

\medskip

Our approach, which is somewhat
similar to that taken in \cite{cutoff}, is more direct than that taken in \cite{peres2013mixing}. As in \cite{cutoff}, where  Starr's maximal inequality was used to obtain a characterization of the cutoff phenomenon for reversible Markov chains, the key ingredient in the proof of Proposition  \ref{prop: AFintro} is a maximal inequality, due to Stein \cite{stein1961maximal} \eqref{eq: Steinmi}.

\section{A maximal inequality}
\label{s:trelimplications}
In this section we state maximal inequalities which shall be utilized in the proof of the main results. We start with a few basic definitions.\begin{definition}
\label{def: L_p distance of measures}
Let $(\Omega,P,\pi)$ be a finite reversible chain. For $f \in \R^{\Omega}$, let $$\mathbb{E}_{\pi}[f]:=\sum_{x \in \Omega}\pi(x)f(x) \quad \text{ and } \quad \Var_{\pi}f:=\mathbb{E}_{\pi}[(f-\mathbb{E}_{\pi}f)^{2}].$$ The inner-product $\langle \cdot,\cdot \rangle_{\pi}$ and $L^{p} $ norm are
$$\langle f,g\rangle_{\pi}:=\mathbb{E}_{\pi}[fg] \text{ and } \|f \|_p:=\left( \mathbb{E}_{\pi}[|f|^{p}]\right)^{1/p},\, 1 \le p < \infty.$$
We identify $P^t$, $P_{\mathrm{L}}^t$, $A_t$,  $H_t$ with the linear operators on $L^p(\R^{\Omega},\pi)
$ given by  
\begin{equation*}
\begin{split}
 A_t f(x):=\sum_{y \in \Omega}A_{t}(x,y)f(y)=& \mathbb{E}_{x}[f(X_t^{\mathrm{ave}})] \text{,  }  H_t f(x):=\sum_{y \in \Omega}H_{t}(x,y)f(y) =\mathbb{E}_{x}[f(X_t^{\mathrm{c}})],
\\ P^tf(x):=\mathbb{E}_{x}[f(X_t)]&\text{ and } P^{t}_{\mathrm{L}}f(x):=\sum_{y \in \Omega
}P_{\mathrm{L}}^{t}(x,y)f(y) =\mathbb{E}_{x}[f(X_t^{\mathrm{L}})]. 
\end{split}
\end{equation*}
By reversibility $P^t$, $P_{\mathrm{L}}^t$, $A_t,H_t:L^2 \to L^2$ are all self-adjoint (w.r.t.~$\langle \cdot,\cdot \rangle_{\pi}$).
\end{definition}

\begin{definition}
Let $P$ be a linear operator and $k \in \Z_+$.
We define $\bigtriangleup P^k:=P^{k+1}-P^k=P^k(P-I)$. For $r > 1$, we define inductively 
 $ \bigtriangleup ^r P^k:= \bigtriangleup ( \bigtriangleup ^{r-1}
P^k)= \bigtriangleup ^{r-1}
P^{k+1}- \bigtriangleup ^{r-1}
P^k=P^k(P-I)^{r}$. Similarly, we define $\bigtriangleup A_k:=
A_{k+1}-A_k=\frac{1}{2}P^k(P^2-I) $. 
\end{definition}
Let $(\Omega,\mu)$ be a probability space. Let $P: L^2(\Omega,\mu) \to L^2(\Omega,\mu)  $ be a positive (i.e.~$f \ge 0 \Longrightarrow Pf \ge 0$) self-adjoint linear operator whose spectrum is contained in the interval $[0,1]$. It is  noted in \cite{stein1961maximal}
that for all $r \ge 1$, there exists
a constant $C_r$ (independent of $(\Omega,\mu)$ and $P$), such that for every $f \in L^2(\Omega,\mu)  $
\begin{equation}
\label{eq: Steinmi}
\|\sup_{t \ge 0} (t+1)^r  \bigtriangleup^r P^t f \|_2
\le C_r\|f\|_2.
\end{equation}
 In \cite{maximal} Stein's argument is extended to the setup where $P$ is a positive contraction with $M(P):=\sup_t t \|P^{t+1}-P^t \|_2 < \infty $ without  the assumptions that $P$ is self-adjoint and that its spectrum is contained in $[0,1]$. In this more general setup $C_{r}$ depends also on $M(P)$. 
\begin{corollary}
\label{cor: maxtriangle}
 There exists an absolute constant $C$ such that for every finite irreducible reversible
Markov chain, $(\Omega,P,\pi)$ and every   
 $f \in \R^{\Omega}$  
\begin{equation}
\label{eq: maxtraingle1}
\left\|\sup_{t \ge 0 }(t+1)\bigtriangleup P_{\mathrm{L}}^t f  \right\|_2^{2} \le C \Var_{\pi}  f \quad \text{and} \quad \left\|\sup_{t \ge 0 }(t+1)\bigtriangleup A_t  f  \right\|_2^{2} \le
C \Var_{\pi}  f.
\end{equation}
\end{corollary}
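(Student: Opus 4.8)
The plan is to derive Corollary \ref{cor: maxtriangle} from Stein's inequality \eqref{eq: Steinmi} with $r=1$, applied to the appropriate positive self-adjoint contractions, after first reducing from general $f$ to the mean-zero case. For any $f\in\R^\Omega$ write $\bar f:=f-\E_\pi f$; since $P_{\laz}^t\mathbf{1}=\mathbf{1}$ and $A_t\mathbf{1}=\mathbf{1}$, we have $\bigtriangleup P_{\laz}^t f=\bigtriangleup P_{\laz}^t\bar f$ and $\bigtriangleup A_t f=\bigtriangleup A_t\bar f$, and $\|\bar f\|_2^2=\Var_\pi f$. So it suffices to prove the two inequalities for mean-zero $f$ with $\Var_\pi f$ replaced by $\|f\|_2^2$.

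For the first inequality, the key observation is that $P_{\laz}=\frac12(I+P)$ is self-adjoint (by reversibility), positive (it is an average of $I$ and the positive operator $P$ — in the sense that $P$ has nonnegative entries, hence maps nonnegative functions to nonnegative functions — and likewise $P_{\laz}$), and, crucially, its spectrum lies in $[0,1]$: the spectrum of $P$ lies in $[-1,1]$ by reversibility and the Perron–Frobenius/variational characterization, so the spectrum of $P_{\laz}$ lies in $[0,1]$. Hence \eqref{eq: Steinmi} applies to $P_{\laz}$ with $r=1$, giving $\|\sup_{t\ge0}(t+1)\bigtriangleup P_{\laz}^t f\|_2\le C_1\|f\|_2$, which is exactly the first inequality with $C=C_1^2$.

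For the second inequality, the natural move is to relate $A_t$ to a power of a nice operator. Note $A_t=\frac12 P^t(I+P)=P^t P_{\laz}$, and $\bigtriangleup A_t=\frac12 P^t(P^2-I)=\frac12 P^t(P-I)(P+I)=P^{t}P_{\laz}(P-I)$. I would like to write $A_t$ in terms of powers of a single positive self-adjoint contraction $Q$ with spectrum in $[0,1]$ so that $\bigtriangleup A_t$ becomes (a constant times) $\bigtriangleup Q^t$ composed with a bounded operator. The clean choice is $Q:=P_{\laz}^2=\frac14(I+P)^2$: then $Q$ is self-adjoint, positive, with spectrum in $[0,1]$, and $\bigtriangleup Q^t=Q^t(Q-I)=P_{\laz}^{2t}\cdot\frac14(P-I)(P+3I)$ — but this is $\frac14 P_{\laz}^{2t}(P-I)(P+3I)$, which is not proportional to $P^t P_{\laz}(P-I)$. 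So instead I would apply \eqref{eq: Steinmi} directly to $Q$ and then bound $(t+1)\bigtriangleup A_t f$ pointwise by a bounded operator applied to $\sup_s(s+1)\bigtriangleup Q^s|f|$-type quantities; concretely, since $\bigtriangleup A_t=\frac12 P^t(P^2-I)$ and $\bigtriangleup (P^2)^t=(P^2)^t(P^2-I)=P^{2t}(P^2-I)$, there is a mismatch in the power ($t$ vs $2t$) that needs care. The cleanest route is: observe $A_t = \frac12(P^t+P^{t+1})$ and $2A_t = P^{t/2}\cdot 2P_{\laz}\cdot P^{t/2}$ is awkward for non-integer $t$, so I will instead directly verify that $B:=$ the operator $f\mapsto \sup_{t\ge0}(t+1)|\bigtriangleup A_t f|$ is controlled by applying \eqref{eq: Steinmi} to $P_{\laz}$ after writing $\bigtriangleup A_t = P_{\laz}^t\cdot\frac12(P-I)(I+P)\cdot(\text{something})$ — more precisely, using $P^t = P_{\laz}^t\cdot(\text{nothing clean})$ fails, so the honest approach is to use the generalized maximal inequality from \cite{maximal} mentioned in the text: $P$ itself is a positive contraction on $L^2$ with $M(P)=\sup_t t\|P^{t+1}-P^t\|_2<\infty$ (finiteness of $M(P)$ follows from reversibility and the spectral gap, since $\|P^{t+1}-P^t\|_2=\sup_{\lambda\in\mathrm{spec}(P)}|\lambda|^t|\lambda-1|$ decays; one must handle the eigenvalue $-1$, which is why one passes to $P_{\laz}$ or to $P^2$), giving \eqref{eq: Steinmi} for $P^2$ with $r=1$; then $\bigtriangleup A_t=\frac12 P^t(P^2-I)$, and splitting $t=2\lfloor t/2\rfloor$ or $2\lfloor t/2\rfloor+1$ and using $\|P\|_2\le 1$ reduces $\sup_t(t+1)|\bigtriangleup A_tf|$ to a constant times $\sup_s(s+1)|(P^2)^s(P^2-I)(|f|\text{ or }P|f|)|$, to which \eqref{eq: Steinmi} applies.

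The main obstacle is this last bookkeeping: Stein's inequality \eqref{eq: Steinmi} is stated for the operator appearing inside $\bigtriangleup^r$, so I must massage $\bigtriangleup A_t$ into the form $\bigtriangleup Q^{s}$ (times a fixed bounded operator) for a single positive self-adjoint contraction $Q$ with spectrum in $[0,1]$, absorbing the difference between the "time" $t$ in $A_t$ and the natural power variable into (i) a harmless constant from comparing $(t+1)$ with $(\lfloor t/2\rfloor+1)$ and (ii) an extra factor of $P$ or $P_{\laz}$ of operator norm $\le 1$. The parity/half-integer issue in the index of $A_t$ is the only genuinely fiddly point; everything else (self-adjointness, positivity, spectrum in $[0,1]$, the reduction to mean-zero $f$) is routine given reversibility. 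I would present the $P_{\laz}$ case in full and remark that the $A_t$ case follows by the same argument applied to $P^2$ (equivalently $P_{\laz}^2$), together with the elementary identity $\bigtriangleup A_t=\frac12 P^t(P^2-I)$ and the bound $\|P\|_{2\to2}\le1$.
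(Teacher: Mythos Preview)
Your proposal eventually lands on the paper's own argument, but it is buried under several false starts. The paper's proof is a single line: apply \eqref{eq: Steinmi} with $r=1$ to $P_{\laz}$ for the first inequality, and to $P^2$ for the second, using the parity identities
\[
\bigtriangleup A_{2k}f=\tfrac12\bigtriangleup(P^2)^k f,\qquad \bigtriangleup A_{2k+1}f=\tfrac12\bigtriangleup(P^2)^k(Pf),
\]
together with the mean-zero reduction and $\Var_\pi(Pf)\le\Var_\pi f$. Your final paragraph says exactly this, so the core idea is right.

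Three points to clean up. First, the detour through the generalized inequality of \cite{maximal} and the quantity $M(P)$ is unnecessary: $P^2$ is a positive (entrywise) self-adjoint contraction with spectrum in $[0,1]$, so Stein's original \eqref{eq: Steinmi} applies to it directly. Second, in your parity reduction you write ``$|f|$ or $P|f|$''; this should be $f$ and $Pf$ (the absolute value is taken outside, on $\bigtriangleup(P^2)^k g$, not on the argument). Third, your parenthetical ``$P^2$ (equivalently $P_{\laz}^2$)'' is wrong: $P_{\laz}^2=\tfrac14(I+P)^2\neq P^2$. The operator you want is $P^2$; $P_{\laz}^2$ would not give the clean identities above.
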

\emph{Proof:}
Note that $\bigtriangleup A_{2t}f=\frac{P^{2t+2}-P^{2t}}{2}f=\frac{1}{2}\bigtriangleup
(P^2)^tf$ and $\bigtriangleup
A_{2t+1}f=\frac{1}{2}\bigtriangleup (P^2)^t(Pf)$. Hence \eqref{eq: maxtraingle1} follows from \eqref{eq: Steinmi} applied to $P_{\mathrm{L}}$ and $P^2$ by noting that $\bigtriangleup P_{\mathrm{L}}^t f=\bigtriangleup P_{\mathrm{L}}^t (f-\mathrm{E}_{\pi}[f]) $,   $\bigtriangleup A_t f=\bigtriangleup A_t (f-\mathrm{E}_{\pi}[f]) $ and $\Var_{\pi}  (Pf) \le \Var_{\pi}  f $. \qed   
\section{Proof of Proposition \ref{prop: AFintro}.}
\label{sec: applications}
In this section we prove Proposition \ref{prop: AFintro}. As noted in the introduction, Theorem \ref{thm: AF} follows as a particular case of Proposition \ref{prop: AFintro} and Theorem \ref{thm: AF2}, in turn, follows in a trivial manner from Theorem \ref{thm: AF}.
We now state large deviation estimates for the Poisson and Binomial distributions. For a proof see e.g.~\cite[Appendix A]{alon2004probabilistic}. \begin{fact}
\label{fact: LD}
Let $Y \sim \mathrm{Pois}(\mu)$ and let $Y' \sim \mathrm{Bin}(t,1/2) $.
Then for every $\epsilon > 0$ we have
that
\begin{equation}
\label{eq: LD1}
\begin{split}
&\mathbb{P}[Y \le \mu(1-\epsilon)] \le e^{- \epsilon^{2} \mu/2}, \quad \mathbb{P}[Y
\ge \mu(1+\epsilon)] \le \exp \left(-\frac{\epsilon^{2} \mu}{2(1+\epsilon/3)}\right), \\ & \mathbb{P}[Y'
\le t(1-\epsilon)/2] =\mathbb{P}[Y'
\ge t(1+\epsilon)/2]  \le e^{- \epsilon^{2} t/4}. 
 \end{split}
 \end{equation}
\end{fact}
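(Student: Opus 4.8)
\emph{Proof proposal.} Each of the four estimates is an instance of the Chernoff (exponential moment) method: for any real random variable $Y$, any $a\in\R$ and any $\lambda>0$, applying Markov's inequality to $e^{\pm\lambda Y}$ gives
\begin{equation*}
\mathbb{P}[Y\ge a]\le e^{-\lambda a}\,\mathbb{E}[e^{\lambda Y}]\qquad\text{and}\qquad \mathbb{P}[Y\le a]\le e^{\lambda a}\,\mathbb{E}[e^{-\lambda Y}].
\end{equation*}
The plan is to substitute the explicit moment generating functions of the Poisson and Binomial laws and then optimize over $\lambda$.

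For $Y\sim\mathrm{Pois}(\mu)$ one has $\mathbb{E}[e^{\lambda Y}]=\exp(\mu(e^{\lambda}-1))$. For the lower tail, assume $0<\epsilon<1$ (if $\epsilon\ge1$ the bound is trivial, since then $\{Y\le\mu(1-\epsilon)\}\subseteq\{Y\le0\}$ and $\mathbb{P}[Y\le0]=e^{-\mu}\le e^{-\epsilon^{2}\mu/2}$); applying the second inequality with $a=\mu(1-\epsilon)$ and minimizing $\lambda a+\mu(e^{-\lambda}-1)$ over $\lambda>0$ gives the minimizer $\lambda=-\log(1-\epsilon)$ and the bound $\mathbb{P}[Y\le\mu(1-\epsilon)]\le\exp(-\mu[(1-\epsilon)\log(1-\epsilon)+\epsilon])$. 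It then suffices to note the elementary identity $(1-\epsilon)\log(1-\epsilon)+\epsilon=\sum_{k\ge2}\frac{\epsilon^{k}}{k(k-1)}\ge\frac{\epsilon^{2}}{2}$. For the upper tail, applying the first inequality with $a=\mu(1+\epsilon)$ and minimizing $-\lambda a+\mu(e^{\lambda}-1)$ over $\lambda>0$ gives $\lambda=\log(1+\epsilon)$ and $\mathbb{P}[Y\ge\mu(1+\epsilon)]\le\exp(-\mu[(1+\epsilon)\log(1+\epsilon)-\epsilon])$; the claim then reduces to the Bennett-type inequality
\begin{equation*}
(1+\epsilon)\log(1+\epsilon)-\epsilon\ \ge\ \frac{\epsilon^{2}}{2(1+\epsilon/3)},\qquad \epsilon>0,
\end{equation*}
which I would verify by a one-variable calculus argument: write the left-hand side as $\int_{0}^{\epsilon}\log(1+u)\,\dd u$, use the pointwise bound $\log(1+u)\ge\frac{2u}{2+u}$ (checked by differentiating $\log(1+u)-\frac{2u}{2+u}$, whose derivative is $\frac{u^{2}}{(1+u)(2+u)^{2}}\ge0$), and compare the resulting explicit primitive $2\epsilon-4\log(1+\epsilon/2)$ with $\frac{\epsilon^{2}}{2(1+\epsilon/3)}$.

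For $Y'\sim\mathrm{Bin}(t,1/2)$ the distribution is symmetric about $t/2$ (indeed $t-Y'\deq Y'$), so $\mathbb{P}[Y'\le t(1-\epsilon)/2]=\mathbb{P}[Y'\ge t(1+\epsilon)/2]$. Writing $2Y'-t=\sum_{i=1}^{t}\xi_{i}$ with $\xi_{1},\dots,\xi_{t}$ i.i.d.\ uniform on $\{-1,1\}$, so that $\mathbb{E}[e^{\lambda\xi_{i}}]=\cosh\lambda\le e^{\lambda^{2}/2}$, the first Chernoff inequality with $\lambda=\epsilon$ gives $\mathbb{P}[Y'\ge t(1+\epsilon)/2]=\mathbb{P}[\sum_{i}\xi_{i}\ge t\epsilon]\le e^{-\epsilon^{2}t/2}\le e^{-\epsilon^{2}t/4}$. (Alternatively, Hoeffding's inequality for bounded i.i.d.\ summands yields this last estimate directly.)

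The only genuinely non-routine ingredient is the Bennett inequality used in the Poisson upper tail; all the remaining steps are optimizations of explicit exponential functions. As these four bounds are entirely standard — and the statement already refers to \cite[Appendix A]{alon2004probabilistic} — in the paper one would simply invoke the reference rather than reproduce the argument.
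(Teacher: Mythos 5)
Your derivation is correct, and it is the standard Chernoff (exponential-moment) argument; the paper itself offers no proof of this fact but simply points to \cite[Appendix A]{alon2004probabilistic}, which is essentially the computation you carry out, so there is nothing in the paper to compare against. Two small remarks on the steps you left implicit. First, in the Poisson lower tail your treatment of $\epsilon\ge 1$ via $\mathbb{P}[Y\le 0]=e^{-\mu}\le e^{-\epsilon^{2}\mu/2}$ is only valid for $\epsilon\le\sqrt{2}$; this is harmless because for $\epsilon>1$ the event $\{Y\le\mu(1-\epsilon)\}$ is empty (as $Y\ge 0$ while $\mu(1-\epsilon)<0$), so the bound is trivially true there. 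Second, the comparison you defer, $2\epsilon-4\log(1+\epsilon/2)\ \ge\ \frac{\epsilon^{2}}{2(1+\epsilon/3)}$, does check out: both sides vanish at $\epsilon=0$, and comparing derivatives reduces to $\frac{2u}{2+u}\ge\frac{6u(6+u)}{(6+2u)^{2}}$, i.e.\ to $8u^{2}\ge 6u^{2}$. With those two points noted, the proposal is complete, and your closing observation is exactly right: in the paper one would (and the authors do) just cite the reference.
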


Let  $(N(t))_{t\ge 0}$ and $(M(t))_{t\ge 0}$ be homogeneous Poisson processes with rate 1, such that  $(N(t))_{t\ge 0}$ , $(M(t))_{t\ge
0}$ and $(X_t)_{t
= 0}^{\infty}$ are mutually independent. We define
\[N_{\laz}(t):=N(t)+M(t) \text{ and }S(\ell):=\sum_{k=1}^{\ell}q_k \sim \mathrm{Bin}(\ell,1/2), \]
\[ \text{where} \quad  q_{k}:=1_{N(T_{k})>N(T_{k-1})} \quad \text{and} \quad T_k:=\inf \{t:N_{\laz}(t)=k \}.\] Let $(\Omega,P,\pi)$ be a Markov chain. 
The \textbf{\emph{natural coupling}} of $(X_{t}^{\ct})_{t \ge 0}$,  $(X_{t})_{t \in \Z_+}$ and~$(X_{t}^{\mathrm{L}})_{t \in \Z_+}$ is defined by setting $X_{t}^{\mathrm{L}}:=X_{S(t)} $ and  $X_{t}^{\ct}:=X_{N(t)}=X_{N_{\laz}(t)}^{\mathrm{L}}$.
 
As can be seen from the natural coupling, $H_t=\sum_{k \ge 0}\frac{e^{-2t}(2t)^k}{k!}P_{\mathrm{L}}^k$. This also follows from Poisson thinning. Also, in the natural coupling ~$(X_{t}^{\mathrm{L}})_{t \in \Z_+}$
and $(N_{\laz}(t))_{t \ge 0}$ are independent. The same holds for  $(X_{t})_{t
\in \Z_+}$ and
 $(S(t))_{t=0}^{\infty}$. The next lemma follows from the natural coupling by a standard construction (cf.~the proofs of Proposition 4.7
and Theorem 5.2 in \cite{levin2009markov}). 
\begin{lemma}
\label{lem: coupling}
Let $(\Omega,P,\pi)$ be a finite irreducible Markov chain. Let $\mu \in \PP(\Omega)$ and  $t \in \R_+$. \begin{itemize}
\item[(1)]
 There exists a coupling $((Y_i^{\mathrm{L}})_{i \in \Z_+},(Z_i^{\mathrm{L,\pi}})_{i \in \Z_+},\xi_{t} )$, such that $(Y_i^{\mathrm{L}})_{i \in \Z_+ } \sim \Pr_{\mathrm{L},\mu} $, $(Z_i^{\mathrm{L,\pi}})_{i \in \Z_+} \sim \Pr_{\mathrm{L},\pi}
$ (the law of the stationary lazy chain), $\xi_{t} \sim \mathrm{Pois}(t) $ in which $\xi_{t} $ and  $(Y_i^{\mathrm{L}})_{i
\in \Z_+ } $ are independent and $$\Pr[Y_{\xi_{t}}^{\mathrm{L}} = Z_{0}^{\mathrm{L,\pi}}  ]=\Pr[Y_{\xi_{t}+i}^{\mathrm{L}} = Z_{i}^{\mathrm{L,\pi}} \text{ for all }i \ge 0 ]=1-\dc(t/2,\mu).$$
\item[(2)] There exists a coupling $((Y_i)_{i \in \Z_+},(Z_i^{\pi})_{i
\in \Z_+},\xi'_{t} )$, such that $(Y_i)_{i
\in \Z_+ } \sim \Pr_{\mu} $, $(Z_i^{\pi})_{i
\in \Z_+}\sim \Pr_{\pi} $ (the law of the stationary chain), $\xi'_{t} \sim \mathrm{Bin}(2t,1/2)
$ in which $\xi'_{t} $ and  $(Y_i)_{i
\in \Z_+ } $ are independent and $$\Pr[Y_{\xi'_{t}} = Z^{\pi}_{0}  ]=\Pr[Y_{\xi'_{t}+i} = Z^{\pi}_{i} \text{ for all
}i \ge 0]=1-\dl(2t,\mu).$$
\end{itemize}
\end{lemma}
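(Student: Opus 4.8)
The plan is to build both couplings from the natural coupling described just above the lemma, following the standard "coupon-collector from stationarity" construction used for Proposition 4.7 and Theorem 5.2 in \cite{levin2009markov}. For part (1), I would first fix $t$ and work with the lazy chain $(X^{\laz}_i)_{i\in\Z_+}$ together with the Poisson clock $N_{\laz}(\cdot)$; recall that under the natural coupling $X^{\ct}_t = X^{\laz}_{N_{\laz}(t)}$, that $N_{\laz}(t)\sim\mathrm{Pois}(2t)$, and (crucially) that $(X^{\laz}_i)_i$ is independent of $(N_{\laz}(s))_{s\ge0}$. So it suffices to produce, for a fixed deterministic number of steps, a coupling of the lazy chain started from $\mu$ and the stationary lazy chain that agrees from the first time they meet onward, and then randomize the number of steps by an independent $\mathrm{Pois}(t)$ variable $\xi_t$ — note $\mathrm{Pois}(t)$ because the heat kernel $H_{t/2}$ corresponds to $\mathrm{Pois}(2\cdot(t/2))=\mathrm{Pois}(t)$ Poisson-many lazy steps, matching the $\dc(t/2,\mu)$ appearing on the right.

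The core construction is the usual one: run $(Y^{\laz}_i)_i\sim\Pr_{\laz,\mu}$ and $(Z^{\laz,\pi}_i)_i\sim\Pr_{\laz,\pi}$ independently until the random index $\xi_t$; at that index the pair $(Y^{\laz}_{\xi_t}, Z^{\laz,\pi}_{\xi_t})$ has marginals $\mu P_\laz^{\xi_t}$ (averaged over $\xi_t\sim\mathrm{Pois}(t)$, i.e. $\mu H_{t/2}$) and $\pi$; maximally couple these two marginals so that $\Pr[Y^{\laz}_{\xi_t}=Z^{\laz,\pi}_0]$ equals one minus their total-variation distance, which is exactly $1-\|\mu H_{t/2}-\pi\|_\TV = 1-\dc(t/2,\mu)$; and then, on the event that they agree at index $\xi_t$, let the two chains evolve together (same transitions) for all subsequent steps, while on the complementary event let them evolve independently. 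Because $\pi$ is stationary for $P_\laz$ and the construction is a genuine coupling, all three marginal requirements hold, the independence of $\xi_t$ from $(Y^{\laz}_i)_i$ is built in, and the coalescence-from-$\xi_t$-onward property gives the displayed equality of probabilities. Part (2) is identical except one uses the discrete skeleton $(X_i)_i$ with the binomial clock $S(\cdot)$ in place of $(X^{\laz}_i)_i$ with $N_{\laz}(\cdot)$: under the natural coupling $X^{\laz}_t = X_{S(t)}$ with $S(t)\sim\mathrm{Bin}(t,1/2)$ independent of $(X_i)_i$, so running $2t$ lazy steps corresponds to $\mathrm{Bin}(2t,1/2)$ ordinary steps, giving $\xi'_t\sim\mathrm{Bin}(2t,1/2)$ and the bound $1-\|\mu P^{\xi'_t}-\pi\|_\TV = 1-\dl(2t,\mu)$ after averaging over $\xi'_t$.

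The only genuinely delicate point — and the step I would write most carefully — is verifying that after maximally coupling the time-$\xi_t$ marginals one can extend the coupling to all later times while keeping both full trajectories equal in law to $\Pr_{\laz,\mu}$ (resp. $\Pr_{\laz,\pi}$) and simultaneously forcing the trajectories to coincide for all $i\ge0$ on the agreement event; the subtlety is that the maximal coupling at index $\xi_t$ is not independent of the past of the $Y$-chain, so one must condition appropriately and check that the conditional law of the future given $(Y^{\laz}_0,\dots,Y^{\laz}_{\xi_t})$ is still Markovian with kernel $P_\laz$ — this is exactly the point handled in the cited proofs in \cite{levin2009markov}, so I would invoke that construction and only indicate the modifications (random instead of deterministic time, and the independence of the clock). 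Everything else is bookkeeping: checking $\mu H_{t/2}=\sum_k e^{-t}t^k/k!\,\mu P_\laz^k$ via Poisson thinning, and $\mu P^{(2t)}_\laz$ integrated against $\mathrm{Bin}$ giving the right object, both of which follow from the displayed identities $H_t=\sum_k \frac{e^{-2t}(2t)^k}{k!}P_\laz^k$ and $X^{\laz}_t=X_{S(t)}$ already established from the natural coupling.
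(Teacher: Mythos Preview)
Your proposal is correct and follows exactly the approach the paper indicates: the paper gives no detailed proof, merely stating that the lemma ``follows from the natural coupling by a standard construction (cf.~the proofs of Proposition 4.7 and Theorem 5.2 in \cite{levin2009markov}),'' and your write-up is precisely a careful elaboration of that construction, including the correct identification of why $\xi_t\sim\mathrm{Pois}(t)$ matches $\dc(t/2,\mu)$ via $H_{t/2}=\sum_k e^{-t}t^k P_\laz^k/k!$. One minor wording issue: you briefly refer to ``the pair $(Y^{\laz}_{\xi_t}, Z^{\laz,\pi}_{\xi_t})$'' before switching to coupling $Y^{\laz}_{\xi_t}$ with $Z^{\laz,\pi}_0$; since the $Z$-chain in the lemma is indexed from $0$, it is cleaner not to run $Z^{\laz,\pi}$ at all before the maximal coupling step---just construct $Z^{\laz,\pi}_0$ directly from $Y^{\laz}_{\xi_t}$ and then build $(Z^{\laz,\pi}_i)_{i\ge 1}$ either glued to $(Y^{\laz}_{\xi_t+i})$ or independently---but this is cosmetic and does not affect correctness.
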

\begin{definition}
\label{def: Dc}
Let $t  \ge 1$ and $s \in [2, e^{t}]$. Denote 
\begin{equation}
\begin{split}
r=r_{s,t}&:=2\sqrt{2t\log
s},
\\  J=J_{s,t}&:=[(t-r  )\vee 0,t+r ],
\\  m=m_{s,t}&:=\lceil r( \sqrt{s}+1)\rceil.
\end{split}
\end{equation} 
In the notation of Lemma \ref{lem: coupling} (with both couplings taken w.r.t.~time $t$), let $G$ be the event
that $Y_{\xi_{t}+i}^{\mathrm{L}} = Z_{i}^{\mathrm{L,\pi}} \text{ for all
}i \ge 0 $ and that $\xi_t \in J $.
Similarly, let $G'$ be the event
that $Y_{\xi'_{t}+i} = Z^{\pi}_{i} \text{ for all
}i \ge 0 $ and that $\xi'_t \in J$. 
\end{definition}
In the following proposition, we only care about (\ref{eq: cttolazy4}) and (\ref{eq: lazytoave4}) (which imply (\ref{eq: cttolazyintro}) and (\ref{eq: lazytoaveintro}), respectively; i.e.~the below proposition implies Proposition \ref{prop: AFintro}). We present the rest of the equations in order to make it clear that (\ref{eq: lazytoave4}) is obtained in an analogous manner to (\ref{eq: cttolazy4}). Thus, we shall only prove part (i) of Proposition \ref{cor: coupling}.

In
the notation of Definition \ref{def: Dc}, the term $\dc(t/2, \mu ) +2/s^{2}$ appearing in \eqref{eq: cttolazy1} and (\ref{eq: cttolazy4}) (resp.~$\dl(2t,\mu)+2/s^{2}$ appearing in \eqref{eq: lazytoave1} and (\ref{eq: lazytoave4})) is an upper bound on the probability that $G$ (resp.~$G'$) fails (where the term $2/s^{2}$ is obtained via Fact \ref{fact: LD}).
\begin{proposition}
\label{cor: coupling}
Let $(\Omega,P,\pi)$ be a finite irreducible reversible chain.
Let $\mu \in \PP(\Omega)$. Let  $B
\subset \Omega$.  Let $t \ge 1$ and $2 \le s \le e^{t}$. In
the notation of Definition \ref{def: Dc},\begin{itemize}
\item[(i)]  Let $\eta^{\mathrm{L}}:=1_{Y_{t+m}^{\mathrm{L}} \in B}$ and $\eta^{\mathrm{L,\pi}}:=1_{Z_m \in B}^{\mathrm{L,\pi}}$ (where $m=\lceil r( \sqrt{s}+1)\rceil$,  $r=2\sqrt{2t\log
s} $). Then
\begin{equation}
\label{eq: cttolazy1}
\pi(B)-\Pr_{\mu}[X_{t+m}^{\mathrm{L}} \in B ] \le  \frac{2}{s^{2}}  +\dc(t/2, \mu ) + \mathbb{E}[(\eta^{\mathrm{L,\pi}}-\eta^{\mathrm{L}})1_{G}] .
\end{equation}
\begin{equation}
\label{eq: cttolazy2}
|\mathbb{E}[(\eta^{\mathrm{L}}-\eta^{\mathrm{L,\pi}})1_{G}] |^{2} \le   s^{-1}
   \mathbb{E}_{\pi} \left[
\sup_{i \ge r \sqrt{s}  }i^{2}|\bigtriangleup P_{\mathrm{L}}^{i}1_B |^2 \right] \le Cs^{-1} \Var_{\pi}1_B \le \frac{C}{s} .
\end{equation}

Consequently,
\begin{equation}
\label{eq: cttolazy4}
\dl(t+m, \mu )\le  \dc(t/2, \mu )+\frac{2}{s^{2}}+\sqrt{C/s}
   .
\end{equation}
\item[(ii)] Let $w \sim \mathrm{Bernoulli}(1/2) $ be independent of $((Y_i)_{i \in
\Z_+},(Z^{\pi}_i)_{i
\in \Z_+},\xi'_{t} ) $. Let $ \eta=1_{Y_{t+m+w} \in B}$ and $ \eta^{\pi}=1_{Z^{\pi}_{m+w} \in B}$. Then
\begin{equation}
\label{eq: lazytoave1}
\pi(B)-\Pr_{\mu}[X_{t+m}^{\mathrm{ave}} \in B ] \le  \frac{2}{s^{2}}  +\dl(2t,\mu) +
 \mathbb{E}[( \eta^{\pi}-  \eta)1_{G'}]   .
\end{equation}
\begin{equation}
\label{eq: lazytoave2}
|\mathbb{E}[( \eta-  \eta^{\pi})1_{G'}] |^{2}\le     s^{-1}
  \mathbb{E}_{\pi}\left[
\sup_{i \ge r \sqrt{s}   }i^{2}|\bigtriangleup A_{i}1_B |^2 \right] \le Cs^{-1} \Var_{\pi}1_B\le \frac{C}{s}.
\end{equation}
Consequently,
\begin{equation}
\label{eq: lazytoave4}
d_{\av}(t+m, \mu )\le \dl(2t, \mu )+\frac{2}{s^{2}}
+\sqrt{C/s} .
\end{equation}
\end{itemize}
\end{proposition}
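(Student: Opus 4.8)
The plan is to prove part (i) in full; part (ii) then follows by exactly the same recipe, with $P_{\laz}$ and $\xi_t$ replaced by $P$ and $\xi_t'$ and with the first maximal inequality of Corollary~\ref{cor: maxtriangle} replaced by the second (see the last paragraph for the one twist, caused by the auxiliary Bernoulli variable $w$). Throughout write $f:=1_B$ and $\ell:=t-\xi_t$, and recall from Definition~\ref{def: Dc} that $r=2\sqrt{2t\log s}$, $J=[(t-r)\vee0,\,t+r]$ and $m=\lceil r(\sqrt s+1)\rceil$, so that $m-r\ge r\sqrt s>0$ and $r\le m$. On $G$ one has $\xi_t\in J$, hence $|\ell|\le r$ and $\xi_t\le t+r\le t+m$ (so $m+\ell\ge0$), and $Y^{\laz}_{\xi_t+i}=Z^{\laz,\pi}_i$ for every $i\ge0$; in particular, on $G$, $\eta^{\laz,\pi}=f(Z^{\laz,\pi}_m)=f(Y^{\laz}_{\xi_t+m})$ and $\eta^{\laz}=f(Y^{\laz}_{t+m})=f(Y^{\laz}_{\xi_t+(m+\ell)})$. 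I will establish \eqref{eq: cttolazy1} and \eqref{eq: cttolazy2}; then \eqref{eq: cttolazy4} follows by combining them and maximizing over $B$, using $\|\mu-\nu\|_{\TV}=\max_B[\nu(B)-\mu(B)]$.

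For \eqref{eq: cttolazy1}: since $\E[\eta^{\laz,\pi}]=\pi(B)$ and $\E[\eta^{\laz}]=\Pr_\mu[X^{\laz}_{t+m}\in B]$,
\[
\pi(B)-\Pr_\mu[X^{\laz}_{t+m}\in B]=\E[(\eta^{\laz,\pi}-\eta^{\laz})1_G]+\E[(\eta^{\laz,\pi}-\eta^{\laz})1_{G^c}],
\]
and since $(\eta^{\laz,\pi}-\eta^{\laz})1_{G^c}\le1_{G^c}$ the last term is at most $\Pr[G^c]$. Now $G^c\subseteq\{\xi_t\notin J\}\cup\{Y^{\laz}_{\xi_t+i}\ne Z^{\laz,\pi}_i\text{ for some }i\}$, whose second event has probability $\dc(t/2,\mu)$ by Lemma~\ref{lem: coupling}(1), and since $\xi_t\sim\mathrm{Pois}(t)$, Fact~\ref{fact: LD} with $\epsilon=r/t$ (the hypothesis $s\le e^t$ gives $r\le 3t$, as the upper-tail bound requires) yields $\Pr[\xi_t\notin J]\le e^{-r^2/(2t)}+e^{-r^2/(4t)}=s^{-4}+s^{-2}\le2/s^2$. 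A union bound gives $\Pr[G^c]\le\dc(t/2,\mu)+2/s^2$, i.e.\ \eqref{eq: cttolazy1}.

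For \eqref{eq: cttolazy2} — the core of the proof. Because the two probabilities in Lemma~\ref{lem: coupling}(1) are equal, the events $\{Y^{\laz}_{\xi_t}=Z^{\laz,\pi}_0\}$ and $\{Y^{\laz}_{\xi_t+i}=Z^{\laz,\pi}_i\ \forall i\}$ coincide up to a null set, so $G=\{\xi_t\in J\}\cap\{Y^{\laz}_{\xi_t}=Z^{\laz,\pi}_0\}$ a.s. Hence $1_G$ is measurable with respect to $\mathcal A:=\sigma(\xi_t,(Y^{\laz}_i)_{i\le\xi_t},Z^{\laz,\pi}_0)$ (enlarged, if needed, by the auxiliary randomness used to build the maximal coupling), and — using independence of $\xi_t$ from $(Y^{\laz}_i)_i$, the Markov property, and that $Z^{\laz,\pi}_0$ is a function of $Y^{\laz}_{\xi_t}$ and that auxiliary randomness — conditionally on $\mathcal A$ the trajectory $(Y^{\laz}_{\xi_t+k})_{k\ge0}$ is a $P_{\laz}$-chain from the $\mathcal A$-measurable state $Y^{\laz}_{\xi_t}$. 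Consequently, on $G$ (where $Y^{\laz}_{\xi_t}=Z^{\laz,\pi}_0$),
\[
\E[(\eta^{\laz}-\eta^{\laz,\pi})1_G\mid\mathcal A]=1_G\bigl(P_{\laz}^{m+\ell}f(Z^{\laz,\pi}_0)-P_{\laz}^{m}f(Z^{\laz,\pi}_0)\bigr).
\]
Since $|\ell|\le r$ on $G$, telescoping $P_{\laz}^{m+\ell}f-P_{\laz}^{m}f$ over $\bigtriangleup P_{\laz}^i f=P_{\laz}^{i+1}f-P_{\laz}^i f$ uses at most $|\ell|\le r$ summands, all with index $i\in[m-r,m+r)$; since $m-r\ge r\sqrt s$ this gives, on $G$,
\[
\bigl|P_{\laz}^{m+\ell}f(Z^{\laz,\pi}_0)-P_{\laz}^{m}f(Z^{\laz,\pi}_0)\bigr|\le\frac{r}{m-r}\sup_{i\ge r\sqrt s}i\,\bigl|\bigtriangleup P_{\laz}^i f(Z^{\laz,\pi}_0)\bigr|\le\frac{1}{\sqrt s}\sup_{i\ge r\sqrt s}i\,\bigl|\bigtriangleup P_{\laz}^i f(Z^{\laz,\pi}_0)\bigr|.
\]
Taking expectations ($|\E X|\le\E|\E[X\mid\mathcal A]|$) and applying Jensen's inequality gives $|\E[(\eta^{\laz}-\eta^{\laz,\pi})1_G]|^2\le s^{-1}\E[\sup_{i\ge r\sqrt s}i^2|\bigtriangleup P_{\laz}^i f(Z^{\laz,\pi}_0)|^2]$; since $Z^{\laz,\pi}_0\sim\pi$ this equals $s^{-1}\mathbb{E}_{\pi}[\sup_{i\ge r\sqrt s}i^2|\bigtriangleup P_{\laz}^i 1_B|^2]$, which Corollary~\ref{cor: maxtriangle} bounds by $Cs^{-1}\Var_\pi 1_B\le C/s$. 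This is \eqref{eq: cttolazy2}.

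For part (ii) one repeats these two steps with the coupling of Lemma~\ref{lem: coupling}(2), but one deliberately leaves $w$ out of the conditioning $\sigma$-algebra $\mathcal A'$: averaging a fresh $P$-chain over $w\sim\mathrm{Bernoulli}(1/2)$ replaces $P^j$ by $A_j=(P^j+P^{j+1})/2$, so the analogue of the displayed identity reads $\E[(\eta-\eta^{\pi})1_{G'}\mid\mathcal A']=1_{G'}(A_{m+\ell'}1_B-A_m1_B)(Z^{\pi}_0)$ with $\ell'=t-\xi_t'$, which telescopes over $\bigtriangleup A_i=A_{i+1}-A_i$ — advancing one time unit at a time, so there is no parity obstruction — and one concludes with the second maximal inequality in Corollary~\ref{cor: maxtriangle}, the $2/s^2$ term now coming from the binomial tail estimate in Fact~\ref{fact: LD}. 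The step I expect to be the main obstacle is precisely the conditioning in the core step: one must notice that the natural coupling has the ``once coupled, stay coupled'' property, so that $G$ is already determined by the data up to time $\xi_t$ and conditioning on $\mathcal A$ does not destroy the Markov evolution of the chain afterwards; and one must push the estimate through the \emph{stationary} chain $Z^{\laz,\pi}$ rather than the $\mu$-started chain, since only then is the $\mathbb{E}_{\pi}$ — and hence, via Corollary~\ref{cor: maxtriangle}, the reversibility hypothesis — available.
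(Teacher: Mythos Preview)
Your proof is correct and follows essentially the same route as the paper's: bound $\Pr[G^c]$ by the Poisson tail plus $\dc(t/2,\mu)$, condition on the coupling data at time $\xi_t$ to reduce $\eta^{\laz}-\eta^{\laz,\pi}$ to $P_{\laz}^{m+\ell}1_B-P_{\laz}^m 1_B$ at the stationary point $Z_0^{\laz,\pi}$, telescope over at most $r$ increments all indexed $\ge r\sqrt s$, pull out $r/(r\sqrt s)=s^{-1/2}$, and finish with Jensen and the Stein maximal inequality. Your treatment is, if anything, slightly more careful than the paper's about the measurability of $G$ (you invoke the ``once coupled, stay coupled'' equality in Lemma~\ref{lem: coupling} to make $1_G$ measurable with respect to the time-$\xi_t$ data), whereas the paper simply conditions on $\{\xi_t=j,\,Y_j^{\laz}=x=Z_0^{\laz,\pi}\}$ and uses $\Pr[G\cap\{Y^{\laz}_{\xi_t}=x=Z_0^{\laz,\pi}\}]\le\pi(x)$ to pass to $\mathbb{E}_\pi$; the two devices are equivalent.
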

\begin{proof}
We first note that (\ref{eq: cttolazy4}) follows from (\ref{eq: cttolazy1})-(\ref{eq: cttolazy2}) by maximizing over $B \subset \Omega $. We now prove (\ref{eq: cttolazy1}).
Let  $B \subset \Omega $. Let  $r,J$ and $m$ be as in Definition \ref{def: Dc}. By Fact \ref{fact: LD} and our assumption that $s \le e^{t}$ (which implies that $\epsilon:=r/t= 2\sqrt{2t^{-1}\log
s} \le3  $), $$\Pr[\xi_t \notin J ] \le \Pr[\xi_t<t- r]+  \Pr[\xi_t>t+ r]\le e^{-t \epsilon^2/2 }+e^{-\frac{t \epsilon^2/2}{(1+\epsilon/3)}}  =e^{-4 \log s }+e^{-\frac{4 \log s}{(1+\epsilon/3)}} \le 2s^{-2}.$$Hence $1-\Pr[G] \le \dc(t/2,\mu)+2s^{-2}$, which implies (\ref{eq: cttolazy1}), as  \[\pi(B)-\Pr_{\mu}[X_{t+m}^{\mathrm{L}} \in B ] \le 1-\Pr[G]+
 \Pr[G \cap \{ Z_{m}^{\mathrm{L,\pi}} \in B \} ]-\Pr[G \cap \{ Y_{t+m}^{\mathrm{L}} \in B \} ]
\] \[ =1-\Pr[G]+\mathbb{E}[(\eta^{\mathrm{L,\pi}}-\eta^{\mathrm{L}})1_{G}] . \]

  We now argue that for every $x\in \Omega$,
  \begin{equation}
\label{cttolazy3}
|\mathbb{E}[\eta-\eta^{\mathrm{L,\pi}}  \mid G, Y_{\xi_{t}}^{\mathrm{L}}=x=Z_0^{\mathrm{L,\pi}}] |\le  \sqrt{\frac{1}{s}}  \sup_{i \ge r\sqrt{s}  }i |\bigtriangleup P_{\laz}^{i}1_B(x) |.
\end{equation}
Indeed, for every $x\in \Omega$ and $j \in J $ 
$$\mathbb{E}[\eta^{\mathrm{L}}\mid\xi_t=j  ,Y_j^{\mathrm{L}}=x=Z_0^{\mathrm{L,\pi}}]=P_{\laz}^{t+m-j}1_B(x),$$  $$\mathbb{E}[\eta^{\mathrm{L,\pi}}\mid\xi_t=j  ,Y_j^{\mathrm{L}}=x=Z_0^{\mathrm{L,\pi}}]=P_{\laz}^{m}1_B(x) .$$
 Thus by the triangle inequality
\begin{equation}
\label{eq: triangleinequalitymainthm}
\begin{split}
& \mathbb{|E}[\eta^{\mathrm{L}}-\eta^{\mathrm{L,\pi}}\mid\xi_t=j  ,Y_j^{\mathrm{L}}=x=Z_0^{\mathrm{L,\pi}}]|=|P_{\laz}^{t+m-j}1_B(x)-P_{\laz}^{m}1_B(x)| \\ & \le 1_{j \neq t } \sum_{i=(t+m-j)
\wedge m}^{[(t+m-j) \vee m]-1}|\bigtriangleup P_{\laz}^{i}1_B(x)|.
\end{split}
\end{equation}
 Note that by the definition of $m=\lceil r( \sqrt{s}+1)\rceil$ and $J=[(t-r  )\vee 0,t+r ]$, for every $j \in J$ we have that $|j-t| \le r $ and $(t+m-j  )\wedge m \ge r\sqrt{s}$.   Whence,
\begin{equation*}
\begin{split}
&  1_{j \neq t } \sum_{i=(t+m-j) \wedge m}^{[(t+m-j) \vee m]-1}|\bigtriangleup P_{\laz}^{i}1_B(x)|  \le r \sup_{i \ge r \sqrt{s} }|\bigtriangleup P_{\laz}^{i}1_B(x)| \\ &  \le \frac{r}{ r \sqrt{s} }\sup_{i \ge r \sqrt{s} }i|\bigtriangleup P_{\laz}^{i}1_B(x)| =  \sqrt{s^{-1}}\sup_{i \ge r \sqrt{s} }i|\bigtriangleup P_{\laz}^{i}1_B(x)|.
\end{split}
\end{equation*}
Plugging this estimate in (\ref{eq: triangleinequalitymainthm}) and averaging over $j$ yields (\ref{cttolazy3}). 

Since $$ \mathbb{|E}[(\eta^{\mathrm{L}}-\eta^{\mathrm{L,\pi}})1_{G}]|
\le \mathbb{E}[ |\mathbb{E}[(\eta^{\mathrm{L}}-\eta^{\mathrm{L,\pi}})1_{G} \mid Z_0^{\mathrm{L,\pi}},\xi_t ] |], $$  averaging (\ref{cttolazy3}) over $Z_0^{\mathrm{L,\pi}}$, and using the fact that $\Pr[G \cap \{ Y_{\xi_{t}}^{\mathrm{L}}=x=Z_0^{\mathrm{L,\pi}} \} ] \le \pi(x)$, for all $x$, together with Jensen's inequality and (\ref{eq: maxtraingle1}), we get that
$$|\mathbb{E}[(\eta^{\mathrm{L}}-\eta^{\mathrm{L,\pi}})1_{G}] |^{2} \le  \frac{1}{s} \mathbb{(E}_{\pi}[ \sup_{i
\ge r \sqrt{s}  }i|\bigtriangleup P_{\laz}^{i}1_B | ])^{2}  \le  \frac{1}{s}\mathbb{E}_{\pi}[
\sup_{i \ge r \sqrt{s}  }i^{2}|\bigtriangleup P_{\laz}^{i}1_B |^2 ]  \le  Cs^{-1}\Var_{\pi}1_B \le C/s. $$
 \end{proof}
\section{Proof of Proposition \ref{prop: easydirectionintro}}
\label{sec: easy}
We start the section by stating a standard fact.\begin{claim}
\label{clm: independentTV}
Let $(\Omega,P,\pi)$ be a finite irreducible chain. Let $\mu \in \PP(\Omega)$. Let $(X_t)_{t \in \Z_{+}}$ be the discrete-time version of the chain. Let $T_1,T_2$ be independent $\Z_+$ valued random variables independent of $(X_{t})_{t \in \Z_+}$. Then $\| \Pr_{\mu}[X_{T_1+T_2} \in \cdot ]-\pi \|_{\mathrm{TV}} \le \| \Pr_{\mu}[X_{T_1 } \in \cdot ]-\pi
\|_{\mathrm{TV}}$, where $ \Pr_{\mu}[X_{T_1 } =y ]:=\sum_{t}\Pr[T_1=t]\Pr_{\mu}^{t}[X_t=y] $ and $ \Pr_{\mu}[X_{T_1+T_2 } =y ]:=\sum_{t}\Pr[T_1+T_{2}=t]\Pr_{\mu}^{t}[X_t=y]
$.
\end{claim}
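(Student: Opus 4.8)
The plan is to prove Claim~\ref{clm: independentTV} by reducing it to the elementary fact that the map $\nu \mapsto \nu P$ is a contraction in total-variation distance and that $\pi$ is a fixed point of $P$. First I would note that for a \emph{deterministic} time shift this is immediate: for any $n \ge 0$ and any distribution $\nu$ on $\Omega$, since $\pi P = \pi$ we have $\|\nu P^{n} - \pi\|_{\mathrm{TV}} = \|\nu P^{n} - \pi P^{n}\|_{\mathrm{TV}} \le \|\nu - \pi\|_{\mathrm{TV}}$, because multiplying two probability measures by the same stochastic matrix can only bring them (weakly) closer in total variation. This is the ``data processing'' inequality for TV; I would either cite \cite{levin2009markov} for it or include the one-line coupling argument (couple $X_0, X_0'$ optimally so that $\Pr[X_0 \ne X_0'] = \|\nu - \pi\|_{\mathrm{TV}}$, then run them together under $P$).

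Next I would condition on the value of $T_2$, which is independent of both $(X_t)_{t}$ and $T_1$. Writing $\mu_{1} := \Pr_{\mu}[X_{T_1} \in \cdot]$ for the law of the chain stopped at the (independent) random time $T_1$, the independence of $T_2$ gives
\[
\Pr_{\mu}[X_{T_1 + T_2} \in \cdot] = \sum_{k \ge 0} \Pr[T_2 = k]\, \mu_1 P^{k}.
\]
Then by the triangle inequality for $\|\cdot\|_{\mathrm{TV}}$ (i.e.\ convexity of the norm), using $\sum_k \Pr[T_2=k] = 1$ and the deterministic-shift bound applied with $\nu = \mu_1$ and $n = k$,
\[
\bigl\| \Pr_{\mu}[X_{T_1+T_2} \in \cdot] - \pi \bigr\|_{\mathrm{TV}}
\le \sum_{k \ge 0} \Pr[T_2 = k]\, \bigl\| \mu_1 P^{k} - \pi \bigr\|_{\mathrm{TV}}
\le \sum_{k \ge 0} \Pr[T_2 = k]\, \| \mu_1 - \pi \|_{\mathrm{TV}} = \| \mu_1 - \pi \|_{\mathrm{TV}},
\]
which is exactly $\| \Pr_{\mu}[X_{T_1} \in \cdot] - \pi \|_{\mathrm{TV}}$. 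The only mild point to be careful about is that $\mu_1$ is a genuine probability distribution and that the interchange of the (countable) sum over $T_1$-values defining $\mu_1$ with the action of $P^k$ is legitimate — but this is just Fubini/Tonelli for nonnegative terms, since everything in sight is a convex combination of probability measures.

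I do not expect a real obstacle here; this is a ``standard fact'' as the authors say, and the whole content is the contraction property of $P$ in TV plus convexity of the norm under mixing over the independent delay $T_2$. If one wanted to avoid even citing the contraction inequality, the cleanest self-contained route is the coupling argument: build $(X_t)$ started from $\mu$ and $(X_t')$ started from $\pi$ on a common space so that they coalesce as soon as they meet, with $\Pr[X_{T_1} \ne X_{T_1}'] = \|\Pr_\mu[X_{T_1} \in \cdot] - \pi\|_{\mathrm{TV}}$ achievable by an optimal coupling at the (independent) time $T_1$; once coalesced they stay together, so running for the further independent time $T_2$ cannot increase the disagreement probability, and $\|\Pr_\mu[X_{T_1+T_2}\in\cdot]-\pi\|_{\mathrm{TV}} \le \Pr[X_{T_1+T_2} \ne X_{T_1+T_2}'] \le \Pr[X_{T_1} \ne X_{T_1}']$. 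Either presentation is short; I would go with the first (contraction + convexity) as it is the most transparent.
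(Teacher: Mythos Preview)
Your proof is correct. The paper does not actually prove Claim~\ref{clm: independentTV}; it is merely stated as a ``standard fact'' and then immediately used in the proof of Proposition~\ref{prop: easydirectionintro}. Your argument---contraction of $P$ (and hence of $P^k$) in total variation together with convexity of the norm when averaging over the independent delay $T_2$---is precisely the standard justification one would give, so there is nothing to compare.
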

\emph{Proof of Proposition \ref{prop: easydirectionintro}:}
Fix some $t >0$ and $0<s \le \sqrt{t}$. Denote $\tau:=t+s \sqrt{t} $.  We first prove \eqref{eq: ctfasterthanlazy1intro}. In the notation of the \emph{standard coupling}, $N_{\laz}(\tau) \sim \mathrm{Poisson}(2\tau) $ and $$\h_{\mu}^{\tau}-\pi= \sum_{k \ge 0}\Pr[N_{\laz}(\tau)=k](\Pr_{\laz,\mu} ^k-\pi).$$ By the triangle inequality, together with (\ref{eq: LD1}) and the fact that $\|\Pr_{\laz,\mu}^{k}-\pi
\|_{\TV} $ is non-decreasing in $k$ and bounded by 1,
\begin{equation*}
\begin{split}
& \| \h_{\mu}^{\tau}-\pi \|_{\TV} = \sum_{k \ge 0  }\Pr[N_{\laz}(\tau)=k]\|\Pr_{\laz,\mu}^{k}-\pi \|_{\TV} \le\Pr[N_{\laz}(\tau) < 2t ]+ \sum_{k \ge  2t }\Pr[N_{\laz}(\tau)=k]\|\Pr_{\laz,\mu}^{k}-\pi \|_{\TV}
\\ & \le \exp \left[- \frac{4s^{2}t}{2(2t+2s \sqrt{t})}\right] +\|\Pr_{\laz,\mu}^{2t}-\pi
\|_{\TV} \le d_{\laz}(2t,\mu)+e^{- \delta^{2}/2},
\end{split}
\end{equation*}
where in the last inequality we have used the assumption that $ s\le \sqrt{t}$. This concludes the proof of  \eqref{eq: ctfasterthanlazy1intro}. We now prove the first line in \eqref{eq: ctandlazyarefaster1intro}. We omit the second line in \eqref{eq: ctandlazyarefaster1intro} as its proof is analogous and as it essentially appears in \cite[Lemma 2.3]{peres2013mixing}.

 As above, denote $\tau:=t+s
\sqrt{t}$. Let $Y \sim \mathrm{Poisson}(2 \tau )$. Let $Z_1$ be a random variable whose conditional distribution,  given that $Y=n$, is  $ \mathrm{Bin}((n-1) \vee 0 ,1/2)$. Let $\eta$ be a Bernoulli random variable with mean $1/2$, independent of $Z_1$ and $Y$. Set $Z:=Z_1+\eta 1_{Y>0} $. Let   $(X_t)_{t \in \Z_{+}}$ be the  discrete-time version of the chain with $X_0 \sim \mu$.   Pick $Y$, $Z_{1}$,  $\eta$ and $(X_t)_{t \in \Z_{+}}$ to be jointly independent. Note that the conditional distribution of $Z$,   given that $Y=n$, is   $ \mathrm{Bin}(n,1/2)$. Hence by Poisson thinning $Z \sim \mathrm{Poisson}(\tau) $ and so $X_{\tau}^{\ct}\sim X_{Z}$.
\medskip

  Let $T:=t+\eta$. Then $Z = (T+Z_1-t)1_{Y>0}$.  Thus $Z 1_{Z_{1} \ge t} =(T+(Z_1-t)_{+})1_{Z_1 \ge t}$, where $a_+:=a \vee 0$ (since $Z_1 \ge t$ implies that $Y>0$ and $Z_1-t=(Z_1-t)_+$). Consequently,
\begin{equation}
\label{eq: X_z1}
\|\Pr_{\mu}(X_{Z} \in \cdot )-\Pr_{\mu}(X_{T+(Z_1-t)_{+}} \in \cdot
) \|_{\TV} \le \|Z-[T+(Z_1-t)_{+}] \|_{\TV} \le \mathbb{P} [Z_1 < t  ].
\end{equation}
By (\ref{eq: LD1}) and the assumption $s \le \sqrt{t}$,
\begin{equation}
\label{eq: X_z2}
  \mathbb{P}[Z_1 < t ] \le  \mathbb{P}[Z \le t  ]        \le\exp \left[- \frac{s^{2}t}{2(t+s \sqrt{t})}\right] \le  e^{- s^{2}/4}.
\end{equation}
Finally, by Claim \ref{clm: independentTV}, in conjunction with (\ref{eq: X_z1})-(\ref{eq: X_z2}), we get that
\begin{equation*}
\label{eq: X_z3}
\begin{split}
& \dc(t+s
\sqrt{t},\mu)=\| \Pr_{\mu}[X_{Z } \in \cdot ]-\pi
\|_{\mathrm{TV}} \le \\ & \|\Pr_{\mu}(X_{Z} \in \cdot )-\Pr_{\mu}(X_{T+(Z_1-t)_{+}} \in \cdot
) \|_{\TV}+ \|\Pr_{\mu}(X_{T+(Z_1-t)_{+}}
\in \cdot
) - \pi  \|_{\TV} \\ & \le e^{- s^{2}/4}+ \|\Pr_{\mu}(X_{T}
\in \cdot
) - \pi  \|_{\TV}=d_{\av}(t,\mu)+e^{- s^{2}/4}.   \quad \qed
\end{split}
\end{equation*}

\section{Proof of Theorem \ref{thm: window}}
\label{s: 5}
Assume that there is a continuous-time cutoff with a window $w_n$. Fix some $0<\epsilon < 1/4$. By Propositions \ref{prop: AFintro} (first inequality) and \ref{prop: sqrt} (second inequality)
$$\ave^{(n)}(\epsilon) \le \mix^{(n)}(\epsilon/2)+C_1(\epsilon) \sqrt{\mix^{(n)}(\epsilon/2)} \le  \mix^{(n)}(\epsilon/2)+C_2(\epsilon)w_n. $$
By Propositions \ref{prop: easydirectionintro} (first inequality) and \ref{prop: sqrt}
(second inequality) we have that
$$-\ave^{(n)}(1-\epsilon) \le -\mix^{(n)}(1-\epsilon/2)+C_{3}(\epsilon) \sqrt{\mix^{(n)}} \le -\mix^{(n)}(1-\epsilon/2) +C_4(\epsilon) w_n.   $$
Hence
$$\ave^{(n)}(\epsilon)-\ave^{(n)}(1-\epsilon) \le\mix^{(n)}(\epsilon/2)-\mix^{(n)}(1-\epsilon/2)+C_5(\epsilon)w_n \le C_6(\epsilon)w_n,$$
as desired. Now assume that the sequence of averaged chains exhibits a cutoff with a window $\tilde w_n$. By Proposition \ref{prop: easydirectionintro} 
$$ \mix^{(n)}(\epsilon) \le \ave^{(n)}(\epsilon/2)  +C_7(\epsilon) \sqrt{\mix^{(n)}} . $$
By Propositions \ref{prop: AFintro} we have that
$$ -\mix^{(n)}(1-\epsilon) \le -\ave^{(n)}(1-\epsilon/2)  +C_{8}(\epsilon) \sqrt{\mix^{(n)}} .   $$
Hence
$$ \mix^{(n)}(\epsilon)-\mix^{(n)}(1-\epsilon) \le \ave^{(n)}(\epsilon/2)-\ave^{(n)}(1-\epsilon/2) +C_9(\epsilon) \sqrt{\mix^{(n)}}\le C_{10}(\epsilon)(\tilde w_n \vee  \sqrt{\mix^{(n)}}) ,$$
as desired. \qed
\section{Example}
\label{sec: examples}
In this section we consider an example which demonstrates that the assertions of Theorems \ref{thm: AF} and \ref{thm: AF2} and of Proposition \ref{prop: AFintro} are in some sense nearly sharp. For notational convenience we suppress the dependence on $n$ in some of the notation below. Throughout this section we write $c_0,c_1,c_2,\ldots $ for positive absolute constants, which are sufficiently small to guarantee that a certain inequality holds.

 Equation (\ref{eq: mainconverseexample}) below  resembles our main results apart from the fact that below the direction of the inequality is reversed, and the exponent of $s$ in the error term of the middle term in  (\ref{eq: mainconverseexample})  (which decays like an inverse polynomial in $s$) is larger (compared to the corresponding exponent in Theorem \ref{thm: AF}; similarly, the error term on the RHS of  (\ref{eq: mainconverseexample}) is similar to the one appearing in Theorem \ref{thm: AF2}, that is to $\psi_{\alpha,C_{2}}(\dc(t))-\dc(t)$). 

\begin{example}
Fix some $0 < \alpha \le 1/2 $. Let $n \in \N $ be such that $s=s_{n,\alpha}  :=\lceil n^{0.5+\alpha} \rceil \ge 2$.  Consider a nearest-neighbor random walk on the interval $\{0,1,2,\ldots,2n+1\}$, with a  bias towards state $2n+1$, whose transition matrix is
given by $P(0,1)=1$, $P(2n+1,2n)=1-\frac{1}{3s}$, $$P(i,i)=\begin{cases} \frac{1}{3s} &  i \ge 2 n-2s, \\
0 & \text{otherwise}. \\
\end{cases}$$
Finally, $P(i,i+1)=3P(i,i-1)$ for all $1 \le i \le 2n$ and is given by
 $$P(i,i+1)=\begin{cases}\frac{3}{4}-\frac{1}{4s} & i \ge  2 n-2s, \\
3/4 & \text{otherwise}. \\
\end{cases}$$ 
By Kolmogorov's cycle condition, this chain is reversible. 
Both the sequence of the associated
continuous-time chains and the sequence of the associated averaged chains  exhibit cutoff around time $4n$ with a cutoff window of size $\sqrt{n}$. In particular, prior to time $4n-s $ the worst-case total variation distance from stationarity of both chains tends to 1 as $n$ tends to infinity.
Moreover, it is not hard to show that$$\dc(4n+s)=(1 \pm o(1))  \h_0[T_{2n+1} >4n+s] \le e^{-c_3 s^2/n} \le e^{-c_3n^{2\alpha}}.  $$
Conversely, we now show that for $t=4n+s$, we have that
\begin{equation}
\label{eq: mainconverseexample}
d_{\av}(t+s) \ge\dc(t)+\frac{c_1}{s} \ge \dc(t)+ \frac{c_2}{\left[ \log (1/\dc(t))\right]^{\frac{1+2\alpha}{4\alpha}}}.
\end{equation} 
The second inequality in (\ref{eq: mainconverseexample}) follows from the choice $s =\lceil n^{\frac{1+2\alpha}{2}} \rceil $ together with $\dc(t)=\dc(4n+s) \le  e^{-c_3n^{2\alpha}} $. We now prove the first inequality in (\ref{eq: mainconverseexample}).  

Consider the sets $\mathrm{Even}:=\{2i:0 \le i \le n \}$, $\mathrm{Odd}:=\{2i+1:0 \le i \le n \} $ and $B:=\{i: i \ge 2n-2s \} $. It is easy to see that $\pi(B) \ge 1-2^{-(2s+1)} $ and that
\begin{equation}
\label{eq: piofeven}
0 \le \pi(\mathrm{Even}) -1/2 \le \frac{\pi(2n-2s)}{3s} \le 2^{-2s}.
\end{equation}
In order to prove (\ref{eq: mainconverseexample}), we shall show that
\begin{equation}
\label{eq: even2}
A_{t+s}(0,\mathrm{Even}) \ge \frac{1}{2} + \frac{c_1}{s}.
\end{equation}
Let  $(X_k)_{k=0}^{\infty}$ be the discrete-time chain with  $X_0=0$.  Note that $T_{2n-2s}$ is even, deterministically. 
If both $X_{4n+2s}$ and $X_{4n+2s+1}$ lie in $B$, we define $$T:=\min \{k:T_{2n-2s} \le k \le 4n+2s \text{ and } X_{\ell} \in B \text{ for all }k \le \ell \le 4n+2s+1  \}. $$   Otherwise, set $T=0$. It is easy to see that $\Pr[T=0] \le C e^{-c_4s^2/n} $ and that
\begin{equation}
\label{eq: T=0}
\frac{1}{2} \Pr_{0}[X_{4n+2s} \in \mathrm{Even} \mid T=0 ]+\frac{1}{2}\Pr_{0}[X_{4n+2s+1} \in \mathrm{Even} \mid T=0]= 1/2.
\end{equation}
Moreover, conditioned on $T>0$, the number of returns to state $2n-2s$ by time $4n+2s$ has an exponential tail. Using this fact, it is not hard to verify that
\begin{equation}
\label{eq: Tnot0}
\begin{split}
& \min_{0 \le r \le 4s } \Pr[T \text{ is even} \mid T \neq 0,4n+2s-T_{2n-2s} = 2r  ] \ge 1-\frac{c_5}{s}.
\\ & \Pr[4n+2s-T_{2n-2s}>8s \mid T \neq 0 ] \le e^{-c_6s^2/n}.
\end{split}
\end{equation}
Consider the projected chain $(Y_{k})_{k=0}^{4n+2s+1-T}$ (conditioned on $T \neq 0$) on $\Omega:=\{ \pm 1\}$ defined via  $Y_{k}:=1_{T+k \in \mathrm{Even} }-1_{T+k \in \mathrm{Odd} }$.  This two state chain whose transition matrix is given by $P=\begin{pmatrix}\frac{\lambda}{2} & 1-\frac{\lambda}{2} \\
1-\frac{\lambda}{2} & \frac{\lambda}{2} \\
\end{pmatrix}$, where  $\lambda:=\frac{2}{3s} $, satisfies $P\begin{pmatrix}1 \\
-1 \\
\end{pmatrix}=(\lambda -1) \begin{pmatrix}1 \\
-1 \\
\end{pmatrix}$. Using the spectral decomposition it is easy to verify that $A_{k}(1, 1)=\frac{1}{2}+\frac{(\lambda-1)^k
\lambda}{4}$. Note that if $k \le 8s$ then for even $k$'s we have that $ 0 \le A_{k}(1, 1)-\frac{1}{2}= \Theta(s^{-1})$ and for odd $k$'s $ 0 \le \frac{1}{2}- A_{k}(1, 1)= \Theta(s^{-1})$. 

Applying this for $k=r$ when $T=4n+2s-r>0$, in conjunction with (\ref{eq: T=0})-(\ref{eq: Tnot0}) yields (\ref{eq: piofeven}) by averaging over $4n+2s-T$ and bounding separately the contribution of all even times (i.e.~$4n+2s-T=2k$, $k \le 4s$) and of all odd times, which are bounded from above by $8s$ . We leave the details as an exercise.
  \end{example}
\section*{Acknowledgements}
We are grateful to David Aldous, Riddhipratim Basu and Allan Sly for many useful discussions. In addition we  want to thank Riddhipratim Basu and Emma Cohen for reading previous drafts of this work and suggesting many improvements to the presentation.

...

\nocite{*}
\bibliographystyle{plain}
\bibliography{cutoff}
\end{document}